\documentclass[11pt]{article}
\usepackage{hyperref}
\usepackage{amsfonts,mathrsfs,bbm,rawfonts,amsmath,amssymb,amsthm}
\usepackage{fullpage, setspace}
\usepackage{color}
\newtheorem{thm}{Theorem}
\newtheorem{lem}[thm]{Lemma}

\newtheorem{defn}[thm]{Definition}

\newtheorem{rem}[thm]{Remark}
\numberwithin{equation}{section}
\numberwithin{thm}{section}
\newcommand{\s}{\,\,\,\,\,}
\newcommand{\p}{\phi}
\newcommand{\n}{\nabla}
\newcommand{\R}{\mbox{Riem}}
\begin{document}

\title{Local Theory of Yang-Mills-Higgs-Schr\"{o}dinger Flow}

\author{Zonglin Jia\footnote{The corresponding author}}

\date{}

\maketitle

\begin{abstract}
In this article, we study two Hamiltonian type flows: Yang-Mills-Higgs-Schr\"odinger flow and $A$-Schr\"odinger flow. For the first one, we only obtain local existence. However, the uniqueness follows from classical tricks for the second one.
\\

\textbf{Keywords:}\quad Geometric energy;\,\, Method of viscous disappearance;\,\, DeTurck's trick.

\textbf{Subclass:}\quad 58E15, 35J50, 53C80.
\end{abstract}

\tableofcontents

\section{Introduction}
Throughout this present article, sometimes we assume that $(M,g,j)$ is a $2n$-dimensional closed K\"ahler manifold with a Riemannian metric $g$ and a complex structure $j$. Sometimes we assume that $(M,g)$ is an $m$-dimensional closed Riemannian manifold. 

$G$ is a compact Lie group with its Lie algebra $\mathfrak{g}$. $\pi: P\to M$ is a principal bundle with the structure group $G$. $(K,h,J)$ is a completed K\"ahler manifold with the metric $h$ and complex structure $J$ such that
\begin{eqnarray}\label{0}
J\xi W=\xi JW\s\mbox{for $W\in TK$ and $\xi\in\mathfrak{g}$,}
\end{eqnarray}
where
$$
\xi W:=\frac{\partial^2\left\{\exp(t\xi)\gamma(s)\right\}}{\partial t\partial s}\bigg|_{t=0,s=0}
$$
and $\gamma:(-\epsilon,\epsilon)\to K$ is any curve such that $(d\gamma/ds)(0):=W$. Moreover, the $G$-action preserves the metric $h$ on $K$. $P\times_GK$ and $P\times_{ad}\mathfrak{g}$ are the associated bundles corresponding to $P$ with $K$ and $\mathfrak{g}$ being the fibers respectively. 

\subsection{Two flows}
The Yang-Mills-Higgs-Schr\"odinger(YMHS) flow is defined as
\begin{eqnarray}\label{YMHS}
\left\{
\begin{aligned}
&J(\p)\partial_t\p=\n_A^*\n_A\p+\mu(\p)\n\mu(\p)\\
&j\partial_tA=D_A^*F_A+\p^*\n_A\p\\
&\p(0,\cdot):=\p_0,\s\s A(0,\cdot):=A_0
\end{aligned}
\right.
\end{eqnarray}
where $\p$ is a section on $P\times_GK$ and $A$ is a connection on $P$. The curvature $F_A$ is a $2$-form on $M$ taking values in $P\times_{ad}\mathfrak{g}$ which is defined as
$$
F_A:=dA+\frac{1}{2}[A,A]
$$
and $\n_A$ is the covariant derivative corresponding to $A$. The known map $\mu: P\times_GK\to P\times_{ad}\mathfrak{g}$ is called the moment map. $D_A$ is an exterior derivative on $P\times_{ad}\mathfrak{g}$ induced by $A$ and $D_A^*$ means its dual operator. Here $\p^*\n_A\p$ acts on any $B\in\Omega^1\left(P\times_{ad}\mathfrak{g}\right)$ by
$$
\left(\p^*\n_A\p,B\right):=\left(\n_A\p,B\p\right).
$$
The above geometric concepts can be referred to \cite{T,W}.
\\

The $A$-Schr\"odinger flow (ASF) is simpler than the previous one and given by
\begin{eqnarray}\label{ASF}
\left\{
\begin{aligned}
&J(\p)\partial_t\p=\n_A^*\n_A\p+\mu(\p)\n\mu(\p)\\
&\p(0,\cdot):=\p_0,
\end{aligned}
\right.
\end{eqnarray}
where $A$ is a known connection independent of time. Meanwhile, $\mu$ is the same as above. However, at this case the base manifold has only to be an $m$-dimensional Riemannian manifold.

\subsection{Background}
\s\s\textbf{Schr\"odinger flow}

Schr\"odinger flow origins from two intersting physical models: Landau-Lifshitz system and homogeneous Schr\"odinger equation. The former describes the evolution of ferromagnets(\cite{LL}); The latter is the fundamental equation of quantum mechanics telling us the states of particles without potentials.

\textbf{Yang-Mills-Higgs flow}

The Yang-Mills-Higgs (YMH) functional consists of the Yang-Mills functional, the kinetic energy functional and the Higgs potential energy functional. Since this functional appears naturally in the classical gauge theory, it has generated a lot of interests among both physicists and mathematicians during the past decades. For example, the Ginzburg-Landau equation in the superconductivity theory coincides with the variational equation of YMH functional. The critical points of the YMH functional are known as Yang-Mills-Higgs fields and the YMH functional is an appropriate Morse function to study the underlying spaces (\cite{AB, JT, P, S2, T2}). On the other hand, it is also well-known that the minimal YMH fields are the so-called symplectic vortices and their moduli space can be used to define invariants on symplectic manifolds with Hamiltonian actions (\cite{CGS, M}).

\textbf{Chern-Simons-Schr\"odinger equation}

The cousin of the Yang-Mills-Higgs-Schr\"odinger equation is the Chern-Simons-Schr\"odinger equation, which is often used to describe the behavior of particles moving in an electromagnetic field without energy dissipation. This equation is related to the quantum Hall theory, and details can be found in \cite{JP, JP2}, etc.

\subsection{History}

\s\,\,\,\,\textbf{Schr\"odinger flow}

Schr\"odinger flow was firstly suggested by Ding and Wang \cite{DW0}. In that paper, they get uniqueness and local smooth solutions defined on one dimensional circle. In 2001, They extended this result to the case of arbitrary dimensional base manifold \cite{DW}. In \cite{JW}, Jia and Wang studied the nonautonomous Schr\"odinger flows and got similar conclusions.

\textbf{Yang-Mills-Higgs flow}

If readers are interested in Yang-Mills-Higgs fields (namely the critical point of Yang-Mills-Higgs functional), they can refer to \cite{ASZ, CS, S2, S}. These articles are mostly about the existence, convergence, blow-up behaviors, isolated singularities and so on.

These literatures \cite{FH, SW} illustrate Yang-Mills-Higgs heat flow. Their common point is that the two references focus on weak solutions. While their difference is that the former considered flows on associated vector bundle and the latter studied ones on general associated bundle.

\textbf{Chern-Simons-Schr\"odinger equation}

Local or global well-posedness, global existence, scattering behaviors, blow-up properties of Chern-Simons-Schr\"odinger equation can be referred to \cite{BDS, D, H, LS, LST, OP, S3}.

\subsection{Main results of YMHS}
Our main results consist of two aspects: the base manifold $M$ is compact or non-compact. However, our strong solutions admit the regularities lower than the initial datum.

\begin{thm}\label{thm1.1}
Suppose $(M,g,j)$ is a $2n$-dimensional closed K\"ahler manifold. $G$ is a compact Lie group. $(K,h,J)$ is a completed K\"ahler manifold satisfing $(\ref{0})$. If $K$ is compact (or non-compact) and $\mu$ is smooth, the Cauchy problem $(\ref{YMHS})$ with initial data $\p_0\in W^{k+2,2}(M,K;D_C)$ and $A_0\in \mathscr{A}^{k+2,2}(M,P\times_{ad}\mathfrak{g})$, for $k\geq n+1$ (or $k\geq n+2$), admits a local strong solution $\p\in L^{\infty}([0,T],W^{k+2,2}(M,K;D_C))$, $A\in L^{\infty}([0,T],\mathscr{A}^{k,2}(M,P\times_{ad}\mathfrak{g}))$ such that 
$$
F_A\in L^{\infty}([0,T],W^{k+1,2}(\Omega^2(M,P\times_{ad}\mathfrak{g}))),
$$
where $T$ depends on 
$$
||F_{A_0}||_{W^{n+1,2}(\n_{A_0})}\left(\mbox{or}\,\,||F_{A_0}||_{W^{n+2,2}(\n_{A_0})}\right)\s\mbox{and}\s||\n_{A_0}\p_0||_{W^{n+1,2}(\n_{A_0})}\left(\mbox{or}\,\,||\n_{A_0}\p_0||_{W^{n+2,2}(\n_{A_0})}\right). 
$$
Moreover, if the initial data is smooth, the system admits a local smooth solution $(\p,A)$, where the existing time $T$ is as above. Under the assumption that $K$ is non-compact, we are able to find a compact set $K_c\subset K$ such that $\p([0,T]\times M)\subset P\times_GK_c$.
\end{thm}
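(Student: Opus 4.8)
The plan is to combine the method of vanishing viscosity with DeTurck's trick, in the spirit of Ding--Wang's treatment of Schr\"odinger flow. The obstruction to a direct approach is that $(\ref{YMHS})$ is Hamiltonian rather than parabolic: because $J(\p)$ and $j$ are (almost) complex structures, after inverting them the leading operators behave like $\pm i\Delta$ and supply no smoothing, and the second equation is in addition gauge-degenerate since the principal part of $A\mapsto D_A^*F_A$ annihilates the gauge-orbit directions. I would therefore first solve a regularized, strictly parabolic approximation, and then recover a genuine solution by a compactness argument as the regularization is removed.

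First I introduce, for $\epsilon>0$, the viscous and gauge-fixed flow
\begin{eqnarray*}
\left\{
\begin{aligned}
&J(\p)\partial_t\p=\n_A^*\n_A\p+\mu(\p)\n\mu(\p)-\epsilon\,J(\p)\n_A^*\n_A\p\\
&j\partial_tA=D_A^*F_A+\p^*\n_A\p-\epsilon\,j\big(D_A^*F_A+d_Ad_A^*A\big),
\end{aligned}
\right.
\end{eqnarray*}
where the DeTurck term $d_Ad_A^*A$ restores the missing part of the Hodge Laplacian so that the $\epsilon$-part of the $A$-equation becomes strictly elliptic. Applying $J(\p)^{-1}=-J(\p)$ and $j^{-1}=-j$ turns the system into $\partial_t\p=-\epsilon\n_A^*\n_A\p+\cdots$ and $\partial_tA=-\epsilon(D_A^*F_A+d_Ad_A^*A)+\cdots$, whose principal symbol is $-\epsilon|\xi|^2\,\mathrm{Id}$ plus a skew-Hermitian dispersive part; the negative-definite real part makes the system strictly parabolic, and standard theory (a contraction mapping in parabolic Sobolev spaces, or a Galerkin scheme) produces a local solution $(\p_\epsilon,A_\epsilon)$ on some $[0,T_\epsilon]$ carrying the full regularity of the data.

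The heart of the matter is a family of a priori bounds, uniform in $\epsilon$, for the geometric energy
\begin{eqnarray*}
\mathcal{E}(t):=\sum_{0\le\ell\le k+1}\Big(\|\n_A^\ell\n_A\p\|_{L^2}^2+\|\n_A^\ell F_A\|_{L^2}^2\Big).
\end{eqnarray*}
I would differentiate each equation $\ell$ times, pair with $\n_A^\ell\n_A\p$ and $\n_A^\ell F_A$ respectively, sum, and compute $\tfrac{d}{dt}\mathcal{E}$. Three structural features make the estimate close: since $J$ and $j$ are skew, the top-order dispersive contributions are purely imaginary and drop out; the viscous terms carry a favourable sign and are discarded from the upper bound; and the highest-order pieces of the couplings $\mu(\p)\n\mu(\p)$ and $\p^*\n_A\p$ are arranged to annihilate one another. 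Commuting $\n_A$ through the equations generates curvature commutators $[\n_A,\n_A]=F_A$, which, together with the moment-map nonlinearities, I would control by Sobolev multiplication and Kato's inequality, using $W^{k+2,2}\hookrightarrow C^0$ (valid as $k+2>n=\tfrac12\dim M$). This produces a Gronwall/Riccati inequality $\tfrac{d}{dt}\mathcal{E}\le C(\mathcal{E})$ whose lifespan $T$ depends only on $\|F_{A_0}\|_{W^{n+1,2}}$ and $\|\n_{A_0}\p_0\|_{W^{n+1,2}}$, exactly as stated; note that the energy controls $\n_A\p$ and $F_A$ at the same order $W^{k+1,2}$, so $\p$ sits at $W^{k+2,2}$ while the connection $A$ lags two derivatives behind, which is the origin of the asymmetric regularity in the conclusion.

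With these uniform bounds I would extract a subsequence converging weakly-$*$ in $L^\infty([0,T],W^{k+2,2})\times L^\infty([0,T],\mathscr{A}^{k,2})$, use an Aubin--Lions argument to pass to the limit in the nonlinear terms, and identify the limit as a solution of $(\ref{YMHS})$ with $F_A\in L^\infty([0,T],W^{k+1,2})$, undoing the DeTurck gauge where necessary by solving the associated gauge ODE. When $K$ is non-compact, the extra derivative ($k\ge n+2$) upgrades $\mathcal{E}$ to a uniform $C^0$ (indeed $C^1$) bound on $\p$, which both pins $\p([0,T]\times M)$ inside a compact $K_c\subset K$ and makes the target geometry $(h,J,\mu)$ uniformly bounded there, so that the compact-target estimates apply verbatim; this is why one derivative more is demanded. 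Finally, for smooth data one runs the same uniform estimates at every level and bootstraps to smoothness on the same interval. The step I expect to be the main obstacle is precisely the uniform energy estimate: engineering the cancellation of the top-order coupling terms between the two equations while keeping every curvature commutator and moment-map term bounded by constants independent of $\epsilon$; once that is in hand, parabolic solvability, weak compactness, the DeTurck recovery, and the bootstrap are all comparatively standard.
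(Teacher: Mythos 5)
Your proposal follows essentially the same route as the paper: viscous regularization of the Hamiltonian structure combined with a DeTurck gauge-fixing term to produce a strictly parabolic approximate system, uniform-in-$\epsilon$ bounds on the geometric energy $\sum_{l}\left(\|\n_A^lF_A\|_{L^2}^2+\|\n_A^{l+1}\p\|_{L^2}^2\right)$ closed by the skew-symmetry of $J$ and $j$ at top order together with Sobolev interpolation, and then passage to the limit, with the extra derivative $k\ge n+2$ in the non-compact case used exactly as in the paper to get a pointwise bound on $\partial_t\p$ and confine the image in a compact subset of $K$. The only place you make the argument look harder than it is: no cancellation between the top-order pieces of $\mu(\p)\n\mu(\p)$ and $\p^*\n_A\p$ is needed or used --- these couplings are sub-principal relative to $\n_A^*\n_A$ and $D_A^*F_A$, and the paper simply absorbs them via the interpolation inequalities of Ding--Wang into a Riccati inequality $dE_{n+1}/dt\le C(1+E_{n+1})^{\lambda(n)}$ at the base level and an inequality linear in $E_k$ for $k\ge n+2$, which is what yields an existence time depending only on the $W^{n+1,2}$ (resp.\ $W^{n+2,2}$) norms of $F_{A_0}$ and $\n_{A_0}\p_0$.
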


\begin{thm}\label{thm1.2}
Suppose $M=\mathbb{R}^{2n}$ is the Euclidean space. $G$ is a compact Lie group. $(K,h,J)$ is a compact K\"ahler manifold satisfing $(\ref{0})$. If $\mu$ is smooth, the Cauchy problem $(\ref{YMHS})$ with initial datum $\p_0\in W^{k+2,2}(\mathbb{R}^{2n},K;D_C)$ and $A_0\in\mathscr{A}^{k+2,2}(\mathbb{R}^{2n},P\times_{ad}\mathfrak{g})$, for $k\geq n+1$, admits a local strong solution $\p\in L^{\infty}([0,T],W^{k+2,2}(\mathbb{R}^{2n},K;D_C))$, $A\in L^{\infty}([0,T],\mathscr{A}^{k,2}(\mathbb{R}^{2n},P\times_{ad}\mathfrak{g}))$ such that $F_A\in L^{\infty}([0,T],W^{k+1,2}(\Omega^2(\mathbb{R}^{2n},P\times_{ad}\mathfrak{g})))$, where $T$ depends on 
$$
||F_{A_0}||_{W^{n+1,2}(\n_{A_0})}\s\mbox{and}\s||\n_{A_0}\p_0||_{W^{n+1,2}(\n_{A_0})}. 
$$
Moreover, if
$$
\p_0\in\mathscr{W}^{\infty}:=\bigcap\limits_{k=2}^{\infty}W^{k,2}(\mathbb{R}^{2n},K;D_C)\s\mbox{and}\s A_0\in\mathscr{A}^{\infty}:=\bigcap\limits_{k=2}^{\infty}\mathscr{A}^{k,2}(\mathbb{R}^{2n},P\times_{ad}\mathfrak{g}),
$$
then the system admits a local solution
$$
\p\in C^{\infty}([0,T],\mathscr{W}^{\infty})\s\mbox{and}\s A\in C^{\infty}([0,T],\mathscr{A}^{\infty}),
$$
where the existing time $T$ is as above.
\end{thm}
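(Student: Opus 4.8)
The plan is to reuse, now on the flat space $M=\mathbb{R}^{2n}$, the viscous-disappearance scheme with DeTurck gauge fixing that already proves Theorem \ref{thm1.1}. Since $J$ on $K$ and $j$ on $M$ both square to $-1$, the two Schr\"odinger-type equations of $(\ref{YMHS})$ may be written as $\partial_t\p=-J(\p)\left(\n_A^*\n_A\p+\mu(\p)\n\mu(\p)\right)$ and $\partial_tA=-j\left(D_A^*F_A+\p^*\n_A\p\right)$. For each $\epsilon>0$ I would perturb them into the parabolic system
\begin{eqnarray}
\left\{
\begin{aligned}
&\partial_t\p_\epsilon=-\left(\epsilon+J(\p_\epsilon)\right)\left(\n_{A_\epsilon}^*\n_{A_\epsilon}\p_\epsilon+\mu(\p_\epsilon)\n\mu(\p_\epsilon)\right)\\
&\partial_tA_\epsilon=-\left(\epsilon+j\right)\left(D_{A_\epsilon}^*F_{A_\epsilon}+\p_\epsilon^*\n_{A_\epsilon}\p_\epsilon\right)-\epsilon D_{A_\epsilon}d_{A_\epsilon}^*A_\epsilon,
\end{aligned}
\right.\nonumber
\end{eqnarray}
where the DeTurck term $-\epsilon D_{A_\epsilon}d_{A_\epsilon}^*A_\epsilon$ removes the gauge degeneracy of $D_A^*F_A$, so that for each fixed $\epsilon>0$ both equations are (Petrowsky) parabolic, the leading real part being a non-degenerate negative Laplace-type operator with coefficient $\epsilon$. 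Standard quasilinear parabolic theory on $\mathbb{R}^{2n}$ then yields, for every $\epsilon$, a short-time solution $(\p_\epsilon,A_\epsilon)$ in the stated Sobolev class.

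The crux is a family of a priori estimates for the geometric energy that are uniform in $\epsilon$ and produce a common lifespan $T$. I would differentiate the flow, pair against $\n_{A_\epsilon}^k$ of the gauge-covariant fields, and integrate by parts. The uniformity rests on the Hamiltonian structure: the $J$- and $j$-terms are skew-adjoint and hence do not feed the energy, while the $\epsilon$-terms only dissipate. Three structural features then close the estimates. First, the flatness of $\mathbb{R}^{2n}$ annihilates every base-curvature term in the Weitzenb\"ock commutators, leaving only contributions built from the gauge curvature $F_{A_\epsilon}$. Second, the compactness of $K$ gives uniform bounds on $J(\p_\epsilon)$, $\mu(\p_\epsilon)$ and all their target derivatives, so the nonlinearities are governed by $\|\n_{A_\epsilon}\p_\epsilon\|_{W^{k,2}}$ and $\|F_{A_\epsilon}\|_{W^{k,2}}$ alone. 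Third, the embedding $W^{k,2}(\mathbb{R}^{2n})\hookrightarrow L^\infty$, valid precisely for $k\geq n+1$, lets Moser-type product inequalities absorb the nonlinear terms, which is exactly where the hypothesis $k\geq n+1$ is spent. These give $\epsilon$-independent bounds for $\n_{A_\epsilon}\p_\epsilon$ and $F_{A_\epsilon}$ on an interval $[0,T]$ with $T$ controlled by $\|F_{A_0}\|_{W^{n+1,2}(\n_{A_0})}$ and $\|\n_{A_0}\p_0\|_{W^{n+1,2}(\n_{A_0})}$. I would then send $\epsilon\to0$, extracting a weak-$*$ limit in $L^\infty([0,T],W^{k,2})$ and upgrading to strong convergence on compact subsets of $\mathbb{R}^{2n}$ via Rellich and a diagonal exhaustion; the DeTurck term, being $O(\epsilon)$, drops out, so the limit solves $(\ref{YMHS})$ directly.

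For the smoothness assertion I would bootstrap. Because $T$ depends only on the low-order norms $\|F_{A_0}\|_{W^{n+1,2}}$ and $\|\n_{A_0}\p_0\|_{W^{n+1,2}}$, the higher-order energy estimates persist on the \emph{same} interval $[0,T]$ for every $k$; hence $\p_0\in\mathscr{W}^\infty$ and $A_0\in\mathscr{A}^\infty$ force $\p\in L^\infty([0,T],W^{k,2})$ and $F_A\in L^\infty([0,T],W^{k,2})$ for all $k$, i.e. spatial $C^\infty$-regularity uniform in time. Time regularity then follows algebraically: the two equations of $(\ref{YMHS})$ express $\partial_t\p$ and $\partial_tA$ as smooth functions of $\p$, $A$ and their spatial derivatives, so differentiating in $t$ and inducting promotes spatial to space-time smoothness, giving $\p\in C^\infty([0,T],\mathscr{W}^\infty)$ and $A\in C^\infty([0,T],\mathscr{A}^\infty)$.

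The step I expect to be the main obstacle is the uniform-in-$\epsilon$ estimate over the non-compact base. On $\mathbb{R}^{2n}$ one loses the automatic integrability and decay that a compact $M$ provides, so each integration by parts must be justified by the $W^{k,2}$-decay of the fields, and the limit passage needs local Rellich compactness together with uniform control at spatial infinity to prevent energy from escaping. Running this for the DeTurck-corrected connection equation, while tracking the gauge-covariant quantities $F_{A_\epsilon}$ and $\n_{A_\epsilon}\p_\epsilon$ rather than $A_\epsilon$ and $\p_\epsilon$ themselves, is the most delicate bookkeeping and accounts for the loss of regularity in $A$ recorded in the statement.
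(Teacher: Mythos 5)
Your route is genuinely different from the paper's. You propose to run the viscous-regularized, DeTurck-corrected system directly on the non-compact base $\mathbb{R}^{2n}$, prove $\epsilon$-uniform energy estimates there, and pass to the limit $\epsilon\to0$. The paper instead never does any analysis on $\mathbb{R}^{2n}$ at all: it truncates the initial data $(\p_0,A_0)$ to smooth pairs $(\p_0^i,A_0^i)$ supported in cubes $[-R_i,R_i]^{2n}$, regards each such pair as data on the flat torus $\mathbb{T}_i=(\mathbb{R}/[-R_i,R_i])^{2n}$, invokes the compact-base result (Theorem \ref{thm1.1}, whose uniform estimates in Section \ref{UE} are explicitly carried out only on a flat torus), and then lets $i\to\infty$. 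The uniform lifespan comes for free because $T$ in the compact case depends only on $\|F_{A_0^i}\|_{W^{n+1,2}(\n_{A_0^i})}$ and $\|\n_{A_0^i}\p_0^i\|_{W^{n+1,2}(\n_{A_0^i})}$, which converge to the corresponding norms of $(\p_0,A_0)$. What the paper's approach buys is that every integration by parts, every interpolation inequality, and the short-time solvability of the parabolic approximation all take place on a closed manifold; what your approach would buy, if completed, is a self-contained argument that does not detour through a sequence of different base manifolds and the attendant limit of sections over varying bundles.

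The gap is exactly at the step you flag as ``the main obstacle'' and then leave unresolved. The uniform estimates of Section \ref{UE} rest on the vector-bundle-valued interpolation inequality of Theorem \ref{DW}, which is stated for a \emph{closed} Riemannian manifold; on $\mathbb{R}^{2n}$ the inhomogeneous Gagliardo--Nirenberg inequalities take a different form (in particular the nesting $L^q\subset L^p$ for $p<q$ fails), so the Moser-type product estimates cannot simply be transplanted. Likewise, short-time existence for the quasilinear parabolic approximation, the justification of each integration by parts, and the compactness needed to pass to the limit all require additional decay or weighted-space arguments on the non-compact base that you do not supply. Since the entire point of the paper's torus-exhaustion device is to avoid having to confront these issues, your proposal as written does not constitute a proof: either adopt the truncation-to-tori scheme, or supply the non-compact versions of the interpolation and existence lemmas on which your direct argument would have to rest.
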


\begin{rem}
The definitions of the above function spaces will be given in Subsection \ref{subsection2.6}.
\end{rem}

\subsection{Main results of ASF}
\begin{thm}
Suppose $(M,g)$ is an $m$-dimensional closed Riemannian manifold. $G$ is a compact Lie group. $(K,h,J)$ is a completed K\"ahler manifold satisfing $(\ref{0})$. If $K$ is compact (or non-compact) and $A$, $\mu$ are smooth, the Cauchy problem $(\ref{ASF})$ with initial data $\p_0\in W^{k,2}(M,K;D_A)$, for $k\geq\lfloor m/2\rfloor+2$ (or $k\geq\lfloor m/2\rfloor+3$) with $\lfloor q\rfloor$ denoting the largest integer not bigger than $q$, admits a local strong solution $\p\in L^{\infty}([0,T],W^{k,2}(M,K;D_A))$, where $T$ depends on 
$$
||\p_0||_{W^{\lfloor m/2\rfloor+2,2}(M,K;D_A)}\left(\mbox{or}\,\,||\p_0||_{W^{\lfloor m/2\rfloor+3,2}(M,K;D_A)}\right). 
$$
Moreover, if the initial data is smooth, the system admits a local smooth solution $\p$, where the existing time $T$ is as above. Under the assumption that $K$ is non-compact, we are able to find a compact set $K_c\subset K$ such that $\p([0,T]\times M)\subset P\times_GK_c$.
\end{thm}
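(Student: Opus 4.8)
The plan is to use the method of viscous disappearance (parabolic regularization), exactly as suggested by the keywords; for the ASF the connection $A$ is fixed and no gauge fixing (DeTurck's trick) is needed. Since $(K,h,J)$ is K\"ahler, $J(\p)$ is a pointwise isometry with $J(\p)^2=-\mathrm{id}$, hence skew-adjoint with respect to $h$ and invertible with $J(\p)^{-1}=-J(\p)$. Thus $(\ref{ASF})$ is equivalent to $\partial_t\p=-J(\p)\bigl(\n_A^*\n_A\p+\mu(\p)\n\mu(\p)\bigr)$, whose principal symbol $-J(\p)|\xi|^2$ is purely skew, so the flow is dispersive rather than parabolic. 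I would therefore introduce, for $\epsilon>0$, the regularized system
\[
\partial_t\p_\epsilon=-\bigl(\epsilon\,\mathrm{id}+J(\p_\epsilon)\bigr)\bigl(\n_A^*\n_A\p_\epsilon+\mu(\p_\epsilon)\n\mu(\p_\epsilon)\bigr),\qquad \p_\epsilon(0,\cdot)=\p_0,
\]
whose principal symbol $-(\epsilon\,\mathrm{id}+J)|\xi|^2$ has negative-definite symmetric part, so it is strictly parabolic.

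First I would solve the regularized system for each fixed $\epsilon>0$. Embedding the fibre $K$ isometrically (Nash, using completeness and bounded geometry in the non-compact case) and working in the associated bundle, this is a quasilinear parabolic system, for which local existence on some $[0,T_\epsilon]$ follows from standard theory: linearization together with the contraction mapping principle in $C([0,T_\epsilon],W^{k,2})$, or a Galerkin scheme. The threshold $k\geq\lfloor m/2\rfloor+2$ guarantees by Sobolev embedding that $W^{k,2}(M)\hookrightarrow C^1(M)$ and that $W^{k,2}$ behaves as a Banach algebra under the products appearing in the nonlinearity, so the fixed-point map is a contraction.

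The crux is the uniform-in-$\epsilon$ a priori estimate. I would apply $\n_A^l$ to the regularized equation, pair with $\n_A^l\p_\epsilon$ in $L^2$, and sum over $l\le k$ to control the geometric energy $E_k(t):=\sum_{l\le k}\|\n_A^l\p_\epsilon(t)\|_{L^2}^2$. Two structural facts make the estimate close with constants independent of $\epsilon$: because $J(\p)$ is skew-adjoint, the leading contribution of the Schr\"odinger term to $\tfrac{d}{dt}E_k$ reduces after integration by parts to a pairing of the form $\langle J(\p_\epsilon)u,u\rangle$, which vanishes identically, while the viscosity term $-\epsilon\n_A^*\n_A\p_\epsilon$ contributes only the favourable dissipation $-\epsilon\|\n_A^{l+1}\p_\epsilon\|_{L^2}^2\le0$. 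Commutators of $\n_A$ with itself produce the curvature $F_A$ and its derivatives, which are bounded since $A$ is fixed and smooth; commutators with $J(\p_\epsilon)$ and the moment-map terms are lower order relative to the leading derivative and are absorbed using $W^{k,2}\hookrightarrow C^1$ and the smoothness of $\mu$. The outcome is a differential inequality $\tfrac{d}{dt}E_k\le C(E_k)$ with $C$ independent of $\epsilon$, yielding a lower bound $T>0$ on the existence time and a bound on $\sup_{[0,T]}E_k$ depending only on $\|\p_0\|_{W^{k,2}}$ and the fixed geometric data. I expect this step --- keeping the non-dissipative Schr\"odinger term out of the top-order energy so the estimate survives $\epsilon\to 0$ --- to be the main obstacle.

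With these bounds, $\{\p_\epsilon\}$ is bounded in $L^\infty([0,T],W^{k,2})$ and, from the equation, $\{\partial_t\p_\epsilon\}$ is bounded in $L^\infty([0,T],W^{k-2,2})$; by the Aubin--Lions--Simon lemma a subsequence converges strongly in $C([0,T],W^{k-1,2})$ and weak-$*$ in $L^\infty([0,T],W^{k,2})$ to a limit $\p$. The $\epsilon$-term then vanishes and the strong convergence of the lower-order norms lets me pass to the limit in the weak formulation, so $\p$ is a local strong solution of $(\ref{ASF})$ with $\p\in L^\infty([0,T],W^{k,2})$. For smooth initial data I would run the same estimate at every level $k$: the uniform bounds give $\p\in L^\infty([0,T],W^{k,2})$ for all $k$ with the same $T$ fixed at the base level, and the equation then upgrades this to joint smoothness in $(t,x)$. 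Finally, for non-compact $K$ the extra derivative $k\geq\lfloor m/2\rfloor+3$ is used to obtain a uniform $C^1$ bound on $\p_\epsilon$ together with an $L^\infty$ bound on $\partial_t\p_\epsilon$ via the equation; since $\p_0(M)$ is compact and $(K,h)$ is complete, the resulting uniform velocity bound confines $\p([0,T]\times M)$ to a bounded, hence precompact, region, and one takes $K_c$ to be its closure --- which also retroactively justifies the isometric embedding and any cutoff of $K$ used in the construction.
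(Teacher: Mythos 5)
Your proposal is correct and follows essentially the same route as the paper: the paper's own ``proof'' simply defers to the viscous-disappearance scheme of Ding--Wang (and to its detailed implementation for the YMHS flow in Sections 3--5), i.e.\ the parabolic regularization by $\epsilon\,\mathrm{Id}+J(\p)$ with no DeTurck step since $A$ is fixed, geometric energy estimates closing via the parallelism and skew-adjointness of $J$, and the compact-range/velocity-bound argument for non-compact $K$. Your sketch reproduces exactly this argument, including the correct role of the extra derivative in the non-compact case.
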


\begin{thm}
Suppose $M=\mathbb{R}^m$ is the Euclidean space. $G$ is a compact Lie group. $(K,h,J)$ is a compact K\"ahler manifold satisfing $(\ref{0})$. If $\mu$ and $A$ are smooth, the Cauchy problem $(\ref{ASF})$ with initial datum $\p_0\in W^{k,2}(\mathbb{R}^m,K;D_A)$, for $k\geq\lfloor m/2\rfloor+2$, admits a local strong solution $\p\in L^{\infty}([0,T],W^{k,2}(\mathbb{R}^m,K;D_A))$, where $T$ depends on $||\p_0||_{W^{\lfloor m/2\rfloor+2,2}(\mathbb{R}^m,K;D_A)}$.
Moreover, if
$$
\p_0\in\mathscr{W}^{\infty}:=\bigcap\limits_{k=1}^{\infty}W^{k,2}(\mathbb{R}^m,K;D_A),
$$
then the system admits a local solution $\p\in C^{\infty}([0,T],\mathscr{W}^{\infty})$,
where the existing time $T$ is as above.
\end{thm}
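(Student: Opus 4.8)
The plan is to transplant the vanishing-viscosity scheme (the ``method of viscous disappearance'') used for the closed-base case to the Euclidean setting $M=\mathbb{R}^m$, using the compactness of $K$ to bound all the target geometry uniformly, and treating the failure of compact Sobolev embeddings on the unbounded base as a separate issue at the limit stage. Since no gauge degree of freedom is present ($A$ is fixed), DeTurck's trick is not needed here.

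First, using $J^2=-1$ I would put $(\ref{ASF})$ into the evolutionary form $\partial_t\p=-J(\p)[\n_A^*\n_A\p+\mu(\p)\n\mu(\p)]$ and attach a parabolic regularizer: for $\epsilon>0$,
$$
\partial_t\p^\epsilon=-\bigl(\epsilon+J(\p^\epsilon)\bigr)\n_A^*\n_A\p^\epsilon-J(\p^\epsilon)\,\mu(\p^\epsilon)\n\mu(\p^\epsilon),\s\p^\epsilon(0,\cdot)=\p_0.
$$
The principal operator $(\epsilon+J(\p^\epsilon))\n_A^*\n_A$ is sectorial for every $\epsilon>0$, since the eigenvalues of $\epsilon+J$ are $\epsilon\pm i$, both with positive real part; hence the regularized system is genuinely parabolic and a short-time solution $\p^\epsilon\in L^\infty([0,T_\epsilon],W^{k,2}(\mathbb{R}^m,K;D_A))$ follows from standard parabolic theory for semilinear systems on $\mathbb{R}^m$, e.g. a contraction argument built on the analytic semigroup generated by $\epsilon\n_A^*\n_A$, using smoothness and boundedness of $\mu$ and of the target geometry.

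Second --- this is where the existence time is made uniform in $\epsilon$ --- I would run the geometric energy estimate. With $A$ fixed in time, $\n_A^\ell$ commutes with $\partial_t$, so for $E_k(t)=\sum_{0\le\ell\le k}\|\n_A^\ell\p^\epsilon\|_{L^2(\mathbb{R}^m)}^2$ I can differentiate and substitute the equation. The decisive structural point is that $J$ is a pointwise isometry, hence skew-adjoint with respect to $h$; the leading-order Schr\"odinger contribution is therefore of the form $\langle -J(\p^\epsilon)v,v\rangle=0$ with $v$ the top covariant derivative, so it cancels, exactly as in Ding--Wang. The viscous term contributes the good sign $-\epsilon\|\n_A^{k+1}\p^\epsilon\|_{L^2}^2\le 0$, and everything else is of lower differential order: commutators $[\n_A^\ell,\n_A^*\n_A]$ produce factors of $F_A$ and its derivatives, while the Higgs/moment-map terms and the derivatives of $J$ and $\mu$ along $\p^\epsilon$ stay bounded because $K$ is compact (so $|J|$, $|\mu|$, $|\n\mu|$, the curvature of $K$, and their derivatives are all uniformly controlled along the flow). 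Estimating these multilinear remainders with the Gagliardo--Nirenberg--Sobolev inequalities, and using $\lfloor m/2\rfloor+1>m/2$ to get $W^{\lfloor m/2\rfloor+1,2}\hookrightarrow L^\infty$, I expect the estimate to close at the threshold $k=\lfloor m/2\rfloor+2$ in the form $\tfrac{d}{dt}E_k\le C(E_{\lfloor m/2\rfloor+2})\,E_k$ with $C$ independent of $\epsilon$, yielding a uniform bound on a common interval $[0,T]$ with $T=T(\|\p_0\|_{W^{\lfloor m/2\rfloor+2,2}})$.

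Third, I would let $\epsilon\to0$, and I expect this step to be the main obstacle. On $\mathbb{R}^m$ Rellich's theorem fails, so the uniform bound $\{\p^\epsilon\}\subset L^\infty([0,T],W^{k,2})$ does not by itself furnish a strongly convergent subsequence, yet strong convergence of enough derivatives is exactly what is needed to pass to the limit in the (at most quadratic) nonlinearities. I would localize: on each ball $B_R$ the embedding $W^{k,2}(B_R)\hookrightarrow W^{k-1,2}(B_R)$ is compact, and the equation bounds $\partial_t\p^\epsilon$ uniformly in a lower space, so an Aubin--Lions argument on $B_R$ gives strong convergence in $C([0,T],W^{k-1,2}(B_R))$; a diagonal argument over an exhaustion $B_R\uparrow\mathbb{R}^m$ together with weak-$*$ compactness in $L^\infty([0,T],W^{k,2})$ then produces a limit $\p$ solving $(\ref{ASF})$ with $\p\in L^\infty([0,T],W^{k,2})$, the energy bound surviving by lower semicontinuity. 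Finally, for the smoothness claim, if $\p_0\in\mathscr{W}^\infty$ the second-step estimate holds for every $k$ on the same interval $[0,T]$ (whose length is fixed by the $W^{\lfloor m/2\rfloor+2,2}$-norm alone), so $\p\in L^\infty([0,T],\mathscr{W}^\infty)$; the usual weak-to-strong upgrade gives $\p\in C([0,T],\mathscr{W}^\infty)$, and reading all spatial derivatives of $\partial_t\p$ off the equation and differentiating repeatedly in $t$ bootstraps $\p\in C^\infty([0,T],\mathscr{W}^\infty)$.
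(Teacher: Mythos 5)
Your proposal is correct in outline and shares the paper's core engine --- viscous regularization $\partial_t\p=-(\epsilon\mbox{Id}+J(\p))(\n_A^*\n_A\p+\mu(\p)\n\mu(\p))$, cancellation of the top-order Schr\"odinger term via skew-adjointness of $J$ together with $\n_AJ(\p)=0$, and a Gronwall closure at the threshold $k=\lfloor m/2\rfloor+2$ using $W^{\lfloor m/2\rfloor+1,2}\hookrightarrow L^{\infty}$ --- but it diverges from the paper in how the non-compactness of $\mathbb{R}^m$ is handled. The paper (which for ASF simply defers to \cite{DW} and to its own Section 6) does not solve the regularized problem on $\mathbb{R}^m$ at all: it approximates $\p_0$ by data supported in boxes $[-R_i,R_i]^m$, regards these as sections over flat tori $\mathbb{T}_i$, invokes the already-proved closed-base theorem there, and passes to the limit in $i$ using the uniform existence time. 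You instead run the whole scheme directly on $\mathbb{R}^m$ and recover compactness at the $\epsilon\to0$ stage by Aubin--Lions on balls $B_R$ plus a diagonal extraction over an exhaustion. Both are viable. The paper's route buys a wholesale reuse of the compact-base result and sidesteps parabolic existence theory on an unbounded domain; yours avoids the slightly fussy business of truncating a $K$-valued section (which needs the stable point of $K$ to make $\p_0^i$ constant near infinity) and of transplanting data and the fixed connection $A$ onto tori. Two caveats you should make explicit if you write this up: (i) your short-time existence and energy estimates for the regularized system on $\mathbb{R}^m$ need $F_A$ and its covariant derivatives to be uniformly bounded (mere smoothness of $A$ on a non-compact base does not give this; the paper is equally silent on the point); and (ii) in the Aubin--Lions step, passing to the limit in $J(\p^{\epsilon})\n_A^*\n_A\p^{\epsilon}$ uses strong local convergence of $\p^{\epsilon}$ in $C^0$ paired with only weak convergence of the second derivatives, which works precisely because the equation is linear in its top-order term --- worth stating rather than leaving implicit.
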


Beforing stating the uniqueness, we define two function spaces:
$$
\mathscr{S}^{\infty}:=W^{2,2}(M,K;D_A)\bigcap\dot{W}^{1,\infty}(M,K;D_A)\bigcap\dot{W}^{2,\infty}(M,K;D_A)
$$
and
$$
\mathscr{S}^m:=W^{\lfloor m/2\rfloor+1,2}(M,K;D_A)\bigcap\dot{W}^{1,\infty}(M,K;D_A)\bigcap\dot{W}^{2,m}(M,K;D_A)
$$
\begin{thm}
Suppose $(M,g)$ is an $m$-dimensional completed manifold with bounded Ricci curvature, $K$ is a completed K\"ahler manifold with bounded geometry. If $\p_1, \p_2\in L^{\infty}([0, T], \mathscr{S}^{\infty})$ are two solutions to (\ref{ASF}) with the same initial map $\p_0\in\mathscr{S}^{\infty}$, then $\p_1=\p_2$ a.e. on $[0, T]\times M$.
\end{thm}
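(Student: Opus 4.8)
The plan is to run the classical energy argument for Schr\"odinger-type flows: measure the discrepancy between $\p_1$ and $\p_2$ by their pointwise geodesic distance in $K$ and derive a Gronwall inequality for its $L^2$-norm. First I would put the flow in solved form, $\partial_t\p=-J(\p)\tau_A(\p)$ with $\tau_A(\p):=\n_A^*\n_A\p+\mu(\p)\n\mu(\p)$, using $J^2=-\mathrm{id}$ and that $J$ is an isometry. Since $K$ has bounded geometry, its injectivity radius is bounded below by some $i_0>0$; because $\p_1(0,\cdot)=\p_2(0,\cdot)=\p_0$ and the equation bounds $\partial_t\p_i$, so that each $\p_i$ is continuous in $t$ into $L^2(M,K)$, a continuity argument confines $d(\p_1,\p_2)<i_0$ on a maximal subinterval $[0,T_*)$, on which the squared distance $d^2(\p_1,\p_2)$ is a smooth function of $(\p_1,\p_2)$. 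I then set $f(t):=\tfrac12\int_M d^2(\p_1,\p_2)\,dV_g$, note $f(0)=0$, and aim to establish $f'(t)\le Cf(t)$.

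For each $(t,x)$ let $\gamma$ be the unique minimizing geodesic in $K$ from $\p_1$ to $\p_2$, parametrized on $[0,1]$, and let $P$ denote parallel transport along $\gamma$ from $\gamma(0)$ to $\gamma(1)$. The first variation of the energy of $\gamma$ (the interior term dropping out since $\gamma$ is a geodesic) gives
\begin{align*}
f'(t)&=\int_M\Big(\langle\dot\gamma(1),\partial_t\p_2\rangle-\langle\dot\gamma(0),\partial_t\p_1\rangle\Big)\,dV_g\\
&=-\int_M\Big(\langle\dot\gamma(1),J(\p_2)\tau_A(\p_2)\rangle-\langle\dot\gamma(0),J(\p_1)\tau_A(\p_1)\rangle\Big)\,dV_g.
\end{align*}
The decisive structural fact is that $K$ is K\"ahler, so $J$ is parallel and hence commutes with $P$; since $\dot\gamma$ is parallel along $\gamma$ one has $P\dot\gamma(0)=\dot\gamma(1)$ and therefore $P\big(J(\p_1)\dot\gamma(0)\big)=J(\p_2)\dot\gamma(1)$. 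Using this to transport the $\p_1$-pairing into the fiber over $\p_2$ and integrating by parts on $M$ (legitimate with no boundary terms because $M$ is complete and the solutions lie in $W^{2,2}$ with bounded first and second derivatives), the genuinely top-order part of the two $\n_A^*\n_A$ contributions cancels exactly. This is the geometric counterpart of the elementary identity $\mathrm{Re}\langle i\Delta w,w\rangle=\mathrm{Re}\big(-i\|\n w\|^2\big)=0$ that yields conservation for the linear Schr\"odinger equation.

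After this cancellation, what survives are terms in which at most one derivative lands on the ``difference'' field $\dot\gamma$ while the remaining derivatives act on $\p_1$ or $\p_2$, together with the curvature terms generated by differentiating $P$ in $x$ (second variation of arclength, i.e.\ Jacobi fields). Each of these is pointwise dominated by $d(\p_1,\p_2)$ times a combination of $\|\n\p_i\|_{L^\infty}$, $\|\n^2\p_i\|_{L^\infty}$, the sectional-curvature bound of $K$, the Ricci bound of $M$, and fixed norms of $A$ and $F_A$. The moment-map terms $\mu(\p)\n\mu(\p)$ are handled by smoothness of $\mu$: along $\gamma$ both $\mu$ and its covariant derivative are Lipschitz, so the difference of the two moment-map contributions is again bounded by $d(\p_1,\p_2)$ times the same $L^\infty$ quantities. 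Collecting everything, $|f'(t)|\le 2Cf(t)$ with $C$ depending only on $T$, on $\|\p_i\|_{L^\infty([0,T],\mathscr S^\infty)}$, and on the geometric bounds; Gronwall's inequality with $f(0)=0$ forces $f\equiv0$ on $[0,T_*)$. Equality $\p_1=\p_2$ in turn keeps $d<i_0$, so $T_*=T$ and the conclusion propagates to all of $[0,T]$.

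The hard part is precisely the top-order cancellation in the second step: because $J(\p_1)\neq J(\p_2)$ and the two covariant Hessians live in different pullback bundles, the comparison of $J(\p_1)\n_A^*\n_A\p_1$ with $J(\p_2)\n_A^*\n_A\p_2$ must be carried out through parallel transport, and only the parallelism of $J$ (the K\"ahler hypothesis) removes the otherwise uncontrollable second-order-in-the-difference term. The leftover, non-cancelling pieces are where the hypotheses $\p_i\in\dot W^{1,\infty}\cap\dot W^{2,\infty}$ are spent: the term in which the coefficient difference $J(\p_1)-P^{-1}J(\p_2)P$ multiplies the Hessian of the other solution is controlled only by the $L^\infty$-bound on $\n^2\p_i$, and the first-order analogue by the $L^\infty$-bound on $\n\p_i$, which explains the choice of the uniqueness class $\mathscr S^\infty$.
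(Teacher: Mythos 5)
First, a caveat: the paper contains no proof of this theorem at all. Section 8 disposes of both existence and uniqueness for ASF in one sentence, deferring the uniqueness argument to \cite{SW2}. So the only meaningful comparison is with the method of that reference, which is the McGahagan-type parallel-transport energy method. Your overall strategy --- measure the discrepancy intrinsically by the geodesic distance in $K$, transport everything into a single fibre by parallel transport along the connecting geodesics, exploit the parallelism of $J$ (the K\"ahler hypothesis) together with skew-symmetry of $J$ for the top-order cancellation, and close with Gronwall after a continuity argument keeping $d(\p_1,\p_2)$ below the injectivity radius --- is exactly that method, so the approach is the right one.

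The gap is in the assertion that, after the top-order cancellation, each surviving term is ``pointwise dominated by $d(\p_1,\p_2)$ times'' $L^\infty$ quantities. Among the survivors are terms carrying one derivative of the difference field, i.e.\ quantities of the type $\n_j\dot\gamma=\n_sV_j$ (with $V_j=\partial_j\gamma$ the Jacobi field along the connecting geodesic), which up to curvature corrections equals $P^{-1}\n_{A,j}\p_2-\n_{A,j}\p_1$. This is \emph{not} controlled pointwise by $d(\p_1,\p_2)$ times anything bounded; it is an independent first-order quantity. If any such term survived linearly against a factor of size $O(d)$, the inequality $f'\le Cf$ would fail, and one would be forced to augment the energy by $\tfrac12\|\n_A\p_1-P\n_A\p_2\|_{L^2}^2$ and close a coupled Gronwall system --- which is precisely what \cite{SW2} (following McGahagan) does, and is where the hypotheses $\dot W^{2,\infty}$, resp.\ $\dot W^{2,m}$, are actually spent. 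The only reason your single-energy computation has a chance of closing is a second, unstated cancellation: after integration by parts the dangerous quadratic term is the contraction $g^{jk}\langle \n_sV_j+\n^K_{\dot\gamma}X_{A_j},\,J(\n_sV_k+\n^K_{\dot\gamma}X_{A_k})\rangle$, which vanishes identically because $J$ is skew and $g^{jk}$ is symmetric, leaving only genuine $O(d^2)$ curvature and moment-map remainders. You must exhibit this cancellation explicitly (or switch to the two-part energy); as written, your bookkeeping of the error terms is false. Relatedly, your closing paragraph is internally inconsistent: the ``coefficient difference $J(\p_1)-P^{-1}J(\p_2)P$'' is identically zero by the very parallelism of $J$ you invoked for the main cancellation, so it cannot be where the $L^\infty$ bound on $\n^2\p_i$ enters; that bound is needed instead to make $\partial_t\p_i\in L^\infty$ (so that $d(\p_1(t,x),\p_2(t,x))\le Ct$ \emph{pointwise}, which your continuity-in-$L^2$ argument does not give, and without which the connecting geodesics are not even defined) and, in the two-energy scheme, to control the evolution of the first-order difference.
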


\begin{thm}
Suppose $m\geq3$ and $M$ is an $m$-dimensional completed manifold with bounded Riemannian curvature and positive injectivity radius, $K$ is a completed K\"ahler manifold with bounded geometry. If $\p_1, \p_2 \in L^{\infty}([0, T],\mathscr{S}^m)$ are two solutions to (\ref{ASF}) with the same initial map $\p_0\in\mathscr{S}^m$, then $\p_1 =\p_2$ a.e. on $[0, T]\times M$.
\end{thm}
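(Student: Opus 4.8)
The plan is to run a Gronwall argument on the intrinsic distance between the two solutions, following the parallel-transport comparison scheme that is standard for Schr\"odinger flows into K\"ahler targets (as in \cite{DW}). Because $\p_1$ and $\p_2$ are sections of $P\times_GK$ rather than vector-valued maps, they cannot be subtracted directly. Instead, since $K$ has bounded geometry (hence a uniform positive injectivity radius) and the two solutions share the initial datum $\p_0$ and lie in $\dot W^{1,\infty}$, they stay within the injectivity radius of one another for short time; for each $(t,x)$ I would connect $\p_1(t,x)$ to $\p_2(t,x)$ by the unique minimizing geodesic $V(t,x,\cdot):[0,1]\to K$ and set $w:=\exp_{\p_1}^{-1}(\p_2)\in\Gamma(\p_1^*TK)$, so that $|w|=\mbox{dist}(\p_1,\p_2)$ and $w(0,\cdot)=0$. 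Parallel transport $P$ along these geodesics identifies $\p_2^*TK$ with $\p_1^*TK$ and is the device that lets me compare the two copies of $(\ref{ASF})$.

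First I would derive the evolution equation for $w$. Writing down $(\ref{ASF})$ for $\p_1$ and $\p_2$ and transporting the $\p_2$-equation back along the geodesics, the K\"ahler condition $\n J=0$ ensures that $J$ commutes with parallel transport, so the principal parts combine into one equation of the schematic form $J(\p_1)\n_t w=\n_A^*\n_A w+\mathcal{R}+\mathcal{M}$, where $\n_t$ is the covariant time derivative along $\p_1$, the curvature remainder $\mathcal{R}$ arises from commuting $P$ past $\n_A$ and $\partial_t$ and consists of terms of the type $\R(w,\n_A\p_1)\n_A\p_1$, $\R(w,\n_A^2\p_1)$ and $\R(\cdot)(\n_A\p_1,\n_A w)$, and $\mathcal{M}$ is the transported difference of the moment-map terms $\mu(\p)\n\mu(\p)$. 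Smoothness of $\mu$ together with bounded geometry makes $\mathcal{M}$ pointwise bounded by $|w|$, $|\n_A w|$ and the (bounded) first derivatives of the $\p_i$.

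Next comes the energy estimate. Pairing the equation with $w$ and taking real parts, $\tfrac{d}{dt}\int_M|w|^2=2\int_M\langle\n_t w,w\rangle$; after inverting $J$ and using that $J$ is an isometry with $J^2=-1$ (hence skew-adjoint), the top-order contribution $\int_M\langle J^{-1}\n_A^*\n_A w,w\rangle$ has vanishing real part once integrated by parts, which is precisely the dispersive cancellation that averts the naive loss of two derivatives. Since the solutions lie in $W^{\lfloor m/2\rfloor+1,2}$ (whence $\n_A w\in L^2$), no boundary terms appear on the complete manifold $M$. What remains is $\mbox{Re}\int_M\langle J^{-1}(\mathcal{R}+\mathcal{M}),w\rangle$. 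The critical term is controlled by H\"older and Sobolev: for instance $\int_M|\R||\n_A^2\p_1||w|\lesssim\|\n^2\p_1\|_{L^m}\|w\|_{L^{2m/(m-2)}}\lesssim\|\n^2\p_1\|_{L^m}\|\n_A w\|_{L^2}$, where the embedding $\dot W^{1,2}\hookrightarrow L^{2m/(m-2)}$ (valid exactly for $m\geq3$, and available on $M$ because of the bounded curvature and positive injectivity radius) together with the $\dot W^{2,m}$-bound supplies the scale-invariant factor, while $\dot W^{1,\infty}$ handles the terms quadratic in $\n_A\p_1$.

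The hard part will be closing the estimate without a net loss of derivatives, because $\mathcal{R}$ genuinely contains $\n_A w$, so $L^2$-control of $w$ is not self-contained. I would remedy this by estimating a one-derivative energy $E(t):=\|w\|_{L^2}^2+\|\n_A w\|_{L^2}^2$ simultaneously, differentiating the equation once more and invoking the same skew-symmetry of $J\n_t$ so that the highest derivative of $w$ always sits inside a term whose real part cancels; the base regularity $W^{\lfloor m/2\rfloor+1,2}\hookrightarrow L^\infty$ furnishes the pointwise coefficient bounds needed at this order. The surviving terms are then all bounded by $C\,E(t)$ with $C$ depending only on the $\mathscr{S}^m$-norms of $\p_1,\p_2$ and on the geometry of $M$ and $K$. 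Since $E(0)=0$, Gronwall's inequality forces $E\equiv0$ on $[0,T]$, hence $w\equiv0$ and $\p_1=\p_2$ a.e.
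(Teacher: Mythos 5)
Your proposal is correct in outline and is essentially the approach the paper itself relies on: the paper gives no argument of its own for this theorem, deferring entirely to the McGahagan-type parallel-transport energy method of \cite{SW2}, which is exactly what you reconstruct --- connect $\p_1$ and $\p_2$ by geodesics in $K$, form a first-order difference energy, exploit the skew-adjointness of $J$ to kill the top-order term, and close the Gronwall estimate using $\dot W^{2,m}$ together with the scale-invariant Sobolev embedding $\dot W^{1,2}\hookrightarrow L^{2m/(m-2)}$ for $m\geq 3$. The only (standard) points to tighten are that the usual implementation takes $\n_A\p_1-P\n_A\p_2$ rather than $\n_A w$ as the first-order quantity, and that keeping the two solutions within the injectivity radius of $K$ on all of $[0,T]$ requires a short continuation/bootstrapping argument rather than a one-shot "short time" remark.
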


\begin{rem}
A completed Riemannian manifold is said to have bounded geometry if it has positive injectivity radius and the Riemannian curvature tensor is bounded and has bounded derivatives.
\end{rem}

\subsection{The methods and tricks}

\s\,\,\textbf{Method of viscous disappearance}

The YMHS flows is a kind of Hamiltonian geometric flow, which is closely related to an energy functional $E(u)$ and its general definition is as following
$$
J(u)\partial_tu=\frac{\delta E}{\delta u}(u)
$$
which is equivalent to
$$
\partial_tu=-J(u)\frac{\delta E}{\delta u}(u).
$$
The conception of Hamiltonian geometric flow will be brought in later. Noting that the variation is generally a non-degenerative and positive operator, we approximate this flow by another system
\begin{eqnarray}\label{AS}
\partial_tu=-(\epsilon\mbox{Id}+J(u))\frac{\delta E}{\delta u}(u).
\end{eqnarray}
The advantage is that we transform a degenerative gradient flow into a negative one in order to use the classical theory of second order parabolic equations.

\textbf{DeTurck's trick}

In our present article, The Yang-Mills-Higgs functional is gauge invariant, which means $(\ref{AS})$ is also degenerative. To solve $(\ref{AS})$, we add a term to make the new system
\begin{eqnarray}\label{AS2}
\partial_tu=-(\epsilon\mbox{Id}+J(u))\frac{\delta E}{\delta u}(u)-\mathcal{E}u
\end{eqnarray}
be parabolic, where $\mathcal{E}$ is an operator of second order. As soon as we get a solution $v$ to $(\ref{AS2})$, it is natural to define a new map
$$
u(t,\cdot):=S(t)^*v(t,\cdot)
$$
for an operator $\mathcal{D}$ of first order and a gauge transformation $S$ dependent of time which satisfy
\begin{eqnarray*}
\left\{
\begin{array}{rl}
&\frac{dS}{dt}=S\circ\mathcal{D}v,\\
&S(0):=\mbox{Id}
\end{array}
\right.
\end{eqnarray*}
such that $u$ is a solution to $(\ref{AS})$.

\textbf{Geometric energy}

After obtaining the unique smooth solution $(\p_{\epsilon},A_{\epsilon})$ to the approximation system, We define an energy functional
$$
E_k:=\frac{1}{2}\sum\limits_{l=0}^k\left(||\n_{A_{\epsilon}}^lF_{A_{\epsilon}}||_{L^2}^2+||\n_{A_{\epsilon}}^{l+1}\p_{\epsilon}||_{L^2}^2\right)
$$
since of two reasons. The first one is that the evolutionary equation with respect to $\n_{A_{\epsilon}}^lF_{A_{\epsilon}}$ admits parabolic structure. Meanwhile the second one is that 
$$
\frac{d}{dt}||\n^k_{A_{\epsilon}}a_{\epsilon}||^2_{L^2}
$$ 
can be bounded by $E_{k+1}$ as soon as we get the uniform estimations with respect to $E_{k+1}$, where $a_{\epsilon}$ is the difference between $A_{\epsilon}$ and the reference connection $C$, i.e. $a_{\epsilon}:=A_{\epsilon}-C$.

\textbf{Uniform estimations}

The Gronwall's inequalities with respect to $E_k$ are divided into two cases: when $k\leq n+1$,
$$
dE_{n+1}/dt\leq C(n+1)(1+E_{n+1})^{\lambda(n)};
$$
and when $k\geq n+2$,
$$
dE_k/dt\leq C(k)(1+E_k)(1+E_{k-1})^{\lambda(k)},
$$
where $\lambda(k)$ are positive integers and $C(k)$ does not depend upon $\epsilon$. Such format ensures a uniform existing time $T$ for all the positive integers $k$.

\section{Preliminaries}
\subsection{Notations}
\s\s We adopt the Einstein summation convention.

Suppose $f:M\to N$ is a map. Given any subset $V\subset N$, $f^{-1}[V]$ denotes $\{x\in M: f(x)\in V\}$.

Let $U$ be a subset of a topological space $M$. $\overline{U}$ denotes the closure of $U$.

The local coordinates of the base manifold $M$ are denoted by $\{x^i\}$ and the counterpart of the fiber $K$ are denoted by $\{y^{\alpha}\}$. $\partial_i$ and $\partial_{\alpha}$ means $\partial/\partial x^i$ and $\partial/\partial y^{\alpha}$ respectively.

For a smooth manifold, $TM$ and $T^*M$ denote the tangent bundle and cotangent bundle respectively. $\Lambda^pM$ means the space of all the $p$-forms and $T^*M^{\otimes p}$ means
$$
\underbrace{T^*M\otimes T^*M\otimes\cdots\otimes T^*M}\limits_{\mbox{$p$-times}}.
$$

The point of $P\times_GK$ is denoted by $p\ast y$ with $p\in P$ and $y\in K$.

$P\times_{ad}\mathfrak{g}$ denotes the adjoint bundle of the principal $G$-bundle $P$ and $\Omega^p(P\times_{ad}\mathfrak{g})$ denotes the space of $P\times_{ad}\mathfrak{g}$-valued $p$-forms for $p\geq1$.

Denote the space of smooth connections on $P$ by $\mathscr{A}$, the set of smooth sections on $P\times_GK$ by $\mathscr{S}$.

$\R_M$ and $\R_K$ denote the Riemannian curvature tensors of the base manifold $M$ and the fiber $K$ respectively.

If $(K,h)$ is embedded isometrically into $\mathbb{R}^L$, then $\mathbb{A}$ denotes the second fundamental form of $K$.

$\n^{\mathfrak{g}}$, $\n^M$ and $\n^K$ denote the Levi-Civita connections on $\mathfrak{g}$, $M$ and $K$ respectively. $\n^{\mathfrak{g},l}$, $\n^{M,l}$ and $\n^{K,l}$ denote $(\n^{\mathfrak{g}})^l$, $(\n^M)^l$ and $(\n^K)^l$,  which mean the $l$-th covariant derivatives of $\n^{\mathfrak{g}}$, $\n^M$ and $\n^K$ respectively. $\n^{M*}$ denotes the dual of $\n^M$.

$V(B)$ denotes the vertical distribution for a fiber bundle $\pi: B\to M$ with the fiber $F$. If $F$ is endowed with a metric $h$, then $V(B)$ admits a canonical metric induced by $h$, which is also denoted by $h$.

For a vector bundle $\pi: E\to M$, $\Gamma(E)$ denotes the space of all the smooth sections and $\Omega^p(E)$ denotes the space of all the smooth $E$-valued $p$-form. In addition, if $E$ is endowed with a metric $\overline{h}$ and a covariant derivative $\overline{\n}$ which is compatible with $\overline{h}$, $W^{l,q}(E)$ and $W^{l,q}(\Omega^p(E))$ denote the $W^{l,q}$-Sobolev completions of $\Gamma(E)$ and $\Omega^p(E)$.

$\#$ denotes a multi-linear map with smooth coefficients.

Given a fiber bundle $B$ over $N$ and a smooth map $u: M\to N$ between two smooth manifolds $M$ and $N$, $u^*B$ denotes the pull-back bundle over $M$ and $du$ denotes the tangent map.

Given a connection $A$ on the principal bundle $P$, $\n_{A,Z}$ denotes $(\n_A)_Z$ and $\n_{A,i}$ denotes $\n_{A,\partial_i}$.

\subsection{Base manifold and principal bundle}
Given a tensor $\omega\in T^*M^{\otimes p}$ with $p$ being a positive integer, the $j$-action on $\omega$ is defined as
$$
(j\omega)(X_1,X_2\cdots,X_p):=\omega(jX_1,X_2\cdots,X_p).
$$
This means $j^2\omega=-\omega$, $g(j\omega,\omega)=0$ and $\n^M_Z(j\omega)=j\n^M_Z\omega$ for any $Z\in TM$.

Given an open neighborhood $U\subset M$, assume that $\Phi_U: \pi^{-1}[U]\to U\times G$ is a local trivialization. We define the so called unit local section $\sigma_U$ with respect to $\Phi_U$ as $\sigma_U(x):=\Phi_U^{-1}(x,e)$, where $e\in G$ is the unit.
\begin{lem}
Suppose $\pi: P\to M$ is a principal bundle with the structure group $G$. If $M$ and $G$ are compact, so is $P$.
\end{lem}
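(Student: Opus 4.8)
The plan is to show that $P$ is compact by exhibiting it as a continuous image of a compact space, or more directly by verifying sequential compactness through the local triviality of the bundle. The essential point is that a principal $G$-bundle is, topologically, a patchwork of pieces of the form $U \times G$, and compactness of $M$ lets us reduce to finitely many such pieces, while compactness of $G$ handles the fibre direction. First I would cover $M$ by open sets $\{U_\alpha\}$ over which $P$ admits local trivializations $\Phi_{U_\alpha}: \pi^{-1}[U_\alpha] \to U_\alpha \times G$. Since $M$ is compact, I extract a finite subcover $U_{\alpha_1}, \dots, U_{\alpha_N}$. Because each $U_{\alpha_i}$ is an open subset of the compact (hence locally compact Hausdorff) manifold $M$, I would shrink these to a finite collection of compact sets $\{K_i\}$ with $K_i \subset U_{\alpha_i}$ that still cover $M$; this is a standard consequence of local compactness together with paracompactness of manifolds.

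Next I would observe that for each $i$ the set $\pi^{-1}[K_i]$ is homeomorphic, via $\Phi_{U_{\alpha_i}}$, to $K_i \times G$. Since $K_i$ is compact and $G$ is compact, the product $K_i \times G$ is compact by Tychonoff (or simply the finite product of compact spaces), and therefore $\pi^{-1}[K_i]$ is compact as the continuous (indeed homeomorphic) image of a compact set. Finally, since the $K_i$ cover $M$, we have
$$
P = \pi^{-1}[M] = \pi^{-1}\!\left[\bigcup_{i=1}^{N} K_i\right] = \bigcup_{i=1}^{N} \pi^{-1}[K_i],
$$
so $P$ is a finite union of compact sets and hence compact. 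Hausdorffness of $P$ is inherited from the local models $U_{\alpha_i} \times G$ and the fact that $P$ is a manifold, so no separation issues arise in identifying "compact" with "closed and sequentially compact."

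The only step that requires genuine care, rather than routine bookkeeping, is the shrinking of the open cover to a finite closed (compact) cover: one must ensure that each shrunk piece $K_i$ remains inside the trivializing neighborhood $U_{\alpha_i}$ so that the homeomorphism with $K_i \times G$ is available. I expect this to be the main technical point, and it is handled by the standard shrinking lemma for locally compact paracompact spaces applied to the finite subcover. Everything else—the product compactness of $K_i \times G$ and the assembly of $P$ as a finite union—is immediate once the trivializations are in place.
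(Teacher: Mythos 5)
Your proof is correct and follows essentially the same route as the paper's: cover $M$ by trivializing neighborhoods, use compactness of $M$ to pass to finitely many, shrink so that each piece corresponds under the trivialization to a compact set times $G$, and write $P$ as the resulting finite union of compact sets. The only cosmetic difference is that the paper shrinks to closures $\overline{V}_i$ of smaller open sets while you invoke the shrinking lemma to get compact sets $K_i\subset U_{\alpha_i}$; both devices address the same (and the only nontrivial) point, namely keeping the shrunken pieces inside the trivializing charts.
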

\begin{proof}
Suppose that on an open neighborhood $U\subset M$, $\pi^{-1}[U]$ is differentially homeomorphic to $U\times G$ via the local trivialization $\Phi_U$. It is easy to see that $\overline{V}\times G$ is compact for any open subset $V\subset U$. The compactness of $M$ implies that we can pick finite such neighborhoods $\{V_i: 1\leq i\leq m\}$ to cover $M$. The fact
$$
P=\bigcup\limits_{i=1}^m\Phi^{-1}_{U_i}\left(\overline{V}_i\times G\right)
$$
implies the wanted conclusion.
\end{proof}

\subsection{Global section on associated bundle}
It is well-known that a principal bundle may not admit any global section. It does if and only if the principal bundle is globally trivial. However, the coming theorem tells us that some special associated bundle admits at least one global section.
\begin{thm}\label{thm2}
If $K$ admits at least one point $o$ such that $go=o$ for any $g\in G$, then there is a global section $s: M\to P\times_GK$.
\end{thm}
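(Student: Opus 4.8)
The plan is to build the section by hand using the fixed point $o$, exploiting the fact that a $G$-fixed point furnishes a canonical element of every fiber of $P\times_GK$. Recall that $P\times_GK$ is the quotient of $P\times K$ by the action $g\cdot(p,y)=(pg,g^{-1}y)$, with quotient map $q(p,y)=p\ast y$, so that the defining relation reads $pg\ast g^{-1}y=p\ast y$ for all $g\in G$, $p\in P$, $y\in K$; the bundle projection is $\bar\pi(p\ast y):=\pi(p)$. I would define, for each $x\in M$,
$$
s(x):=p\ast o\quad\text{for an arbitrary }p\in\pi^{-1}[x].
$$

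The first and only substantive point is to check that this definition is independent of the chosen representative $p$ in the fiber $\pi^{-1}[x]$. Any other element of that fiber has the form $pg$ for some $g\in G$. Applying the defining relation with $y=o$ gives $pg\ast g^{-1}o=p\ast o$, and since $o$ is fixed by every group element we have $g^{-1}o=o$, whence $pg\ast o=p\ast o$. Thus $s(x)$ is well-defined. The section property is then immediate: $\bar\pi(s(x))=\bar\pi(p\ast o)=\pi(p)=x$, so $\bar\pi\circ s=\mathrm{Id}_M$.

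It remains to verify smoothness, which is where local trivializations enter but presents no real difficulty. Over any trivializing open set $U\subset M$ I would use the unit local section $\sigma_U:U\to P$ defined earlier by $\sigma_U(x)=\Phi_U^{-1}(x,e)$. Since $\sigma_U$ is smooth, the constant assignment $x\mapsto o$ is smooth, and the quotient map $q:P\times K\to P\times_GK$ is a smooth submersion, the composition
$$
s|_U=q\circ(\sigma_U,o)
$$
is smooth on $U$. As $M$ is covered by such neighborhoods and the local expressions agree (by well-definedness above), $s$ is a globally defined smooth section.

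The fixed-point hypothesis is doing all the work: it is precisely what makes $p\ast o$ insensitive to the $G$-ambiguity in choosing a lift of $x$ to $P$. I therefore do not expect any genuine obstacle; the one step deserving care is the representative-independence computation, after which the section and smoothness properties follow routinely.
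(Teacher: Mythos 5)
Your proof is correct, and since the paper simply states ``The proof is easy and we leave it to the readers,'' your argument is precisely the canonical one the author intends: the well-definedness of $p\ast o$ under the $G$-ambiguity is the only substantive point, and you handle it, the section property, and smoothness cleanly.
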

\begin{proof}
The proof is easy and we leave it to the readers.
\end{proof}

The fiber $K$ who satisfies the condition of Theorem \ref{thm2} is called stable at the point $o$.

\begin{rem}
From now on, we always assume that the fiber $K$ is stable at some points.
\end{rem}

\subsection{Covariant derivative on associated bundle}
Assume that $U\subset M$ is any open neighborhood and $\p$ is a global section. Using a local unit section $\sigma$ we can write $\p|_U:=\sigma\ast u$ for a unique smooth map $u: U\to K$ and $A|_U:=A_idx^i$ with $A_i\in C^{\infty}(U,\mathfrak{g})$.

For any tangent vector $\xi\in\mathfrak{g}$, there is an infestimal action of $\xi$ on $K$, which generates a vector field $X_{\xi}$
\[ X_\xi(y) := \frac{d}{dt}\Big|_{t=0} exp(\xi t)y, ~\forall y \in K.\]

Firstly, employing this conception, we give the definition of covariant derivative acting on the section $\p\in\mathscr{S}$
$$
\left(\n_A\p\right)|_U:=du+X_{A_i}(u)\otimes dx^i.
$$

Secondly, we define the covariant derivative on the pull-back vertical distribution $\p^*V(P\times_GK)$ which is also denoted by $\n_A$
$$
\n_A:\Gamma(\p^*V(P\times_GK))\to\Gamma(\p^*V(P\times_GK)\otimes T^*M)
$$
$$
\n_AY:=\n^MY+\n^K_YX_{A_i}\otimes dx^i.
$$
Since of $(\ref{0})$, it is easy to verify
$$
\n^K_{JY}X_{\xi}=J\n^K_YX_{\xi}\s\mbox{for any $\xi\in\mathfrak{g}$ and}\s\n_AJ(\p)=0.
$$

Recall that $G$ respects the metric $h$. Hence $X_{\xi}$ is a Killing field and $\n^KX_{\xi}$ is anti-symmetric for any $\xi\in\mathfrak{g}$, i.e.
$$
h(Y,\n^K_{W}X_{\xi})+h(W,\n^K_{Y}X_{\xi})=0\s\mbox{for any $Y,W\in TK$}.
$$
The above conclusion tells us that $\n_A$ is compatible with $h$, i.e, $\n_Ah=0$. More details are referred to \cite{CS}. In the next, we shall give the specific expression of $\n_A^*\Phi$ for any $\Phi\in T^*M^{\otimes r}\otimes\p^*V(P\times_GK)$ with $r$ being a positive integer
$$
(\n_A^*\Phi)_{i_1\cdots i_{r-1}}=g^{kl}\n_{A,k}\Phi_{i_1\cdots i_{r-1}l}.
$$

Thirdly, we extend the above definition to $\p^*V(P\times_GK)$-valued $p$-forms. That is to say, we define
$$
\n_A: \Gamma\left(\p^*V(P\times_GK)\otimes\Lambda^pM\right)\to\Gamma\left(\p^*V(P\times_GK)\otimes\Lambda^pM\otimes T^*M\right)
$$
$$
\n_A(Y\otimes\omega):=\n_AY\otimes\omega+Y\otimes\n^M\omega\s\mbox{for $Y\in\p^*V(P\times_GK)$ and $\omega\in \Lambda^pM$}.
$$

Now there are two Laplace operators for the connection $A$. Namely, the Hodge Laplacian
$$\Delta_A = D_A^*D_A + D_AD_A^*$$
and the rough Laplacian $\nabla_A^*\nabla_A$. The well-known Weitzenb\"ock formula describes the difference
\begin{equation}\label{e:weitzenbock1}
  \nabla_A^*\nabla_A \Psi = \Delta_A \Psi+\R_M\#\Psi + F_A\#\Psi + \R_K\#d_A\phi\#d_A\phi\#\Psi
\end{equation}
for $\Psi\in\Gamma\left(\p^*V(P\times_GK)\otimes\Lambda^pM\right)$, where $d_A\phi:=\pi_A\circ d\phi$ and $\pi_A$ means the projection from $T(P\times_GK)$ to $V(P\times_GK)$. Locally $d_A\phi$ is exactly $du$.

\begin{rem}
If $K=\mathfrak{g}$ and the $G$-action on $\mathfrak{g}$ is the adjoint representation $ad$, then for a section $\psi$ on the associated vector bundle $P\times_{ad}\mathfrak{g}$, the covariant derivative still denoted by $\n_A$ is given by
$$
\n_A\psi:=dv+[a_i,v]\otimes dx^i,
$$
where on a neighborhood $V\subset M$, we have coincide  $\psi|_V$ with a smooth map $v: V\to\mathfrak{g}$.

The covariant derivative $\n_A$ on the pull-back vertical distribution $\psi^*V(P\times_{ad}\mathfrak{g})$ is then defined as
$$
\n_AY:=\n^MY+\n^{\mathfrak{g}}_Ya_i\otimes dx^i
$$
for any $Y\in\Gamma(\psi^*V(P\times_{ad}\mathfrak{g}))$.
\end{rem}

\subsection{Extrinsic format of associated bundle}
At first, we recall the following equivarant embedding theorem by Moore and Schlafly \cite{MS}.
\begin{thm}\label{et}
Suppose $(K,h)$ is a compact Riemannian manifold and $G$ is a compact Lie group which
acts on $K$ isometrically, then there exists an orthogonal representation $\rho: G\to O(L)$ and an isometric embedding $\iota: K\to\mathbb{R}^L$ such that $\iota(gy)=\rho(g)\iota(y)$ for any $g\in G$ and $y\in K$.
\end{thm}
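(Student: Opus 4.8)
The plan is to prove Theorem \ref{et} in two stages: first produce a \emph{smooth} equivariant embedding into some orthogonal representation (the Mostow--Palais step), and then upgrade it to an \emph{isometric} one by an equivariant Nash-type construction. The guiding principle for the second stage is that if $F=(f_1,\dots,f_L):K\to\mathbb{R}^L$ has components spanning a $G$-invariant subspace of $C^\infty(K)$ on which $G$ acts orthogonally, then $F$ is automatically equivariant (the contragredient action coincides with $\rho$ precisely because $\rho$ is orthogonal), while $F^*\langle\cdot,\cdot\rangle=\sum_\alpha df_\alpha\otimes df_\alpha$. Hence realizing $h$ as such a sum of squared differentials simultaneously yields equivariance and isometry.

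For the first stage I would let $G$ act on $C^\infty(K)$ by $(g\cdot f)(y):=f(g^{-1}y)$. Since $G$ is compact, the Peter--Weyl theorem guarantees that the $G$-finite vectors, namely those lying in a finite-dimensional $G$-invariant subspace, are dense in $C^\infty(K)$ in the $C^\infty$ topology. Because $K$ is compact, finitely many smooth functions suffice to separate points and tangent vectors, i.e. to define an embedding; approximating such functions by $G$-finite ones and taking the still finite-dimensional $G$-invariant span $V$ of their $G$-orbits, I would average an arbitrary inner product over $G$ to obtain an invariant one, so that $G$ acts on $V\cong\mathbb{R}^L$ through an orthogonal representation $\rho:G\to O(L)$. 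The evaluation map $y\mapsto(f\mapsto f(y))\in V^*$ is then $G$-equivariant, and for a sufficiently rich choice of $V$ it is an embedding; this produces the required $\rho$ together with a smooth equivariant $\iota_0$.

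The isometric refinement is the main obstacle. The embedding $\iota_0$ induces a $G$-invariant metric $g_0$, invariant because $\iota_0$ is equivariant and $\rho$ acts isometrically on $\mathbb{R}^L$, but in general $g_0\neq h$. Choosing $\epsilon>0$ small enough that $h-\epsilon g_0$ is still positive definite (possible by compactness of $K$), one wants to represent this $G$-invariant symmetric $2$-tensor as $\sum_\beta dw_\beta\otimes dw_\beta$ with the new functions $w_\beta$ again transforming in an orthogonal representation. I would obtain this by running Nash's embedding scheme \emph{inside the category of $G$-invariant data}: the key observation is that for any finite-dimensional $G$-invariant $V\subset C^\infty(K)$ with invariant inner product and orthonormal basis $\{f_\alpha\}$, the tensor $\sum_\alpha df_\alpha\otimes df_\alpha$ is itself $G$-invariant, and as $V$ ranges over larger Peter--Weyl subspaces these tensors span pointwise an open cone of invariant tensors. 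Carrying out the Nash--Moser iteration with every approximation, every partition of unity (constructed $G$-invariantly via the slice theorem for compact group actions), and every correction term averaged over $G$, one solves $\sum_\beta dw_\beta\otimes dw_\beta=h-\epsilon g_0$ exactly while keeping the $w_\beta$ grouped into orthogonal $G$-representations.

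Concatenating the rescaled map $\sqrt{\epsilon}\,\iota_0$ with $W=(w_\beta)$ gives $\iota=(\sqrt{\epsilon}\,\iota_0,W):K\to\mathbb{R}^L$, which is equivariant for the direct-sum orthogonal representation and satisfies $\iota^*\langle\cdot,\cdot\rangle=\epsilon g_0+(h-\epsilon g_0)=h$, as required. I expect the genuine difficulty to lie entirely in making the Nash iteration respect the $G$-action: the estimates of the classical proof must be arranged so that averaging over $G$ preserves the freeness and immersion conditions at every step, which is exactly where compactness of $G$, supplying a bi-invariant Haar measure and the slice theorem, is indispensable.
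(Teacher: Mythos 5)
The paper does not actually prove this statement: Theorem \ref{et} is quoted verbatim from Moore and Schlafly \cite{MS} and used as a black box, so the only benchmark for your proposal is their published argument. Your first stage is fine and coincides with theirs: the Mostow--Palais construction via Peter--Weyl density of $G$-finite functions, an averaged invariant inner product on the resulting finite-dimensional span $V$, and the evaluation map do produce an orthogonal representation $\rho$ and a smooth equivariant embedding $\iota_0$; rescaling so that $h-\epsilon\,\iota_0^*\langle\cdot,\cdot\rangle$ is positive definite and $G$-invariant is also correct.

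The gap is that the entire content of the theorem is concentrated in the step you dispose of in one sentence, and the mechanism you propose for it does not work as stated. Averaging a map over $G$ does not average its induced metric: if you replace $w$ by $\bar w(y):=\int_G\rho(g)w(g^{-1}y)\,dg$, the tensor $\sum_\beta d\bar w_\beta\otimes d\bar w_\beta$ is not the $G$-average of $\sum_\beta dw_\beta\otimes dw_\beta$, because the pullback metric is quadratic rather than linear in $w$; so ``averaging every correction term over $G$'' destroys, rather than preserves, the identity the Nash iteration must maintain at each stage. The averaging that does behave well is $y\mapsto\bigl(g\mapsto u(g^{-1}y)\bigr)\in L^2(G,\mathbb{R}^N)$, whose induced metric is the $G$-average of $u^*\delta$ --- and this is precisely how \cite{MS} proceed: they apply the classical, non-equivariant Nash theorem once to the invariant tensor $h-\epsilon g_0$, equivariantize the resulting embedding through the regular representation, and then truncate to a finite-dimensional Peter--Weyl block, which reintroduces a small invariant error tensor that has to be removed by a separate perturbation argument. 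None of that machinery appears in your sketch, and your auxiliary claim that the tensors $\sum_\alpha df_\alpha\otimes df_\alpha$ ``span pointwise an open cone of invariant tensors'' is an unproved assertion of essentially the same depth as the theorem itself. In short, the two-stage architecture is right, but the second stage is asserted rather than proved, and the specific averaging prescription offered for it is based on a false commutation between group averaging and the pullback-metric operation.
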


Using the above representation, we define $\chi: \mathfrak{g}\to\mathfrak{o}(L)$ with $\chi(\xi):=d\rho(\xi)$ where $\mathfrak{o}(L)$ is the Lie algebra of $O(L)$, i.e. the set of all skew-symmetric $L\times L$ matrices. Thus we can write
$$
X_{\xi}(y)=\chi(\xi)y\s\mbox{for any $y\in K$ and $\xi\in\mathfrak{g}$}
$$
which implies
$$
\n^K_YX_{\xi}=\chi(\xi)Y-\mathbb{A}(y)(\chi(\xi)y,Y),
$$
where $\mathbb{A}$ is the second fundamental form of $K$ in $\mathbb{R}^L$.

\subsection{Sobolev space}\label{subsection2.6}
We need the vector bundle valued interpolation inequality which is dued to Ding and Wang.

\begin{thm}\label{DW}(\cite{DW})
Let $s\in\Gamma(E)$ for a smooth vector bundle $E$ endowed with a metric $\overline{h}$ and a compatible covariant derivative $\overline{\n}$ over a closed $m$-dimensional Riemannian manifold $(M,g)$. Then we have
$$
||\overline{\n}^js||_{L^p}\leq C||s||^a_{W^{l,q}(\overline{\n})}||s||^{1-a}_{L^r}
$$
with
$$
||s||^q_{W^{l,q}(\overline{\n})}:=\sum\limits_{i=0}^l||\overline{\n}^is||^q_{L^q}
$$
where $1\leq p,q,r\leq\infty$ and $j/l\leq a\leq 1$($j/l\leq a<1$ if $q=m/(l-j)\not=1$) are numbers such that
$$
\frac{1}{p}=\frac{j}{m}+\frac{1}{r}+a(\frac{1}{q}-\frac{1}{r}-\frac{l}{m})
$$
for the constant $C=C(M,j,l,q,r,a)$.
\end{thm}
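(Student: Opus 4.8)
The plan is to deduce the bundle-valued inequality from the classical $\mathbb{R}^N$-valued Gagliardo--Nirenberg inequality on $\mathbb{R}^m$ by a localisation argument, using in an essential way that $M$ is closed so that all of the underlying geometric data are uniformly bounded. I would fix once and for all a finite atlas $\{U_\alpha\}$ of coordinate charts over which $E$ is trivialised by a smooth local frame, together with a subordinate partition of unity $\{\rho_\alpha\}$ with $\sum_\alpha\rho_\alpha\equiv 1$ and $\rho_\alpha\in C_c^\infty(U_\alpha)$. Over each $U_\alpha$ I write $s$ in components $\tilde s=(s^a)$ relative to the frame, so that the whole problem is reduced to finitely many compactly supported $\mathbb{R}^N$-valued functions on a Euclidean ball.

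The first technical step is to compare the intrinsic object $\overline{\n}^i s$ with the coordinate derivatives $\partial^i\tilde s$. The two sources of discrepancy are the bundle connection one-form and the Levi-Civita Christoffel symbols that enter through the $T^*M^{\otimes i}$ factors; on the compact $M$ these, and all of their derivatives, are bounded. An induction on $i$ then gives, in each chart, the two-sided comparison
$$
\overline{\n}^i s=\partial^i\tilde s+\sum_{k=0}^{i-1}B_{i,k}\#\partial^k\tilde s,
\qquad
\partial^i\tilde s=\overline{\n}^i s+\sum_{k=0}^{i-1}\tilde B_{i,k}\#\overline{\n}^k s,
$$
with $\|B_{i,k}\|_{C^0},\|\tilde B_{i,k}\|_{C^0}\le C(M,E,i)$. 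In particular the intrinsic norm $\|s\|_{W^{l,q}(\overline{\n})}$ is equivalent to the coordinate Sobolev norm $\sum_\alpha\|\rho_\alpha\tilde s\|_{W^{l,q}}$, and, after commuting $\rho_\alpha$ through the derivatives by the Leibniz rule, $\|\overline{\n}^j s\|_{L^p}$ is dominated by $\sum_\alpha\sum_{i\le j}\|\partial^i(\rho_\alpha\tilde s)\|_{L^p}$.

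Next I would apply the Euclidean Gagliardo--Nirenberg inequality componentwise to each compactly supported $\rho_\alpha\tilde s$, obtaining $\|\partial^i(\rho_\alpha\tilde s)\|_{L^p}\le C\|\partial^l(\rho_\alpha\tilde s)\|_{L^q}^{a_i}\|\rho_\alpha\tilde s\|_{L^r}^{1-a_i}$, where $a_j=a$ is the exponent in the statement and the exponents $a_i$ attached to the lower orders $i<j$ are fixed by the scaling relation. The endpoint restrictions in the hypotheses — the role of $q=\infty$ and the strict inequality $a<1$ that is required when $q=m/(l-j)$ — are inherited directly from the corresponding Euclidean endpoint statements, so they cause no new difficulty. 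Summing over $\alpha$, converting $\|\partial^l(\rho_\alpha\tilde s)\|_{L^q}$ back into $\|s\|_{W^{l,q}(\overline{\n})}$ via the comparison above, and using $\|\rho_\alpha\tilde s\|_{L^r}\le\|s\|_{L^r}$, produces the desired bound up to the lower-order contributions.

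The main obstacle is precisely the absorption of those lower-order contributions. Unlike the scale-invariant Euclidean inequality, on a closed manifold the lower-order derivative terms and the Leibniz terms from the cut-offs genuinely survive, and each carries its own interpolation exponent $a_i$; one must fold them all into the single monomial $\|s\|_{W^{l,q}(\overline{\n})}^{a}\|s\|_{L^r}^{1-a}$. I would handle this with the elementary estimate $X^{b}Y^{1-b}\le X^{a}Y^{1-a}+Y$ (valid for $0\le b\le a$ and $X,Y>0$), which reduces everything to bounding the residual $\|s\|_{L^r}$ by $\|s\|_{W^{l,q}(\overline{\n})}^{a}\|s\|_{L^r}^{1-a}$, i.e.\ to the compact Sobolev embedding $W^{l,q}(\overline{\n})\hookrightarrow L^r$ in the admissible parameter range; where this comparison is borderline I would instead run an induction on the pair $(l,j)$, invoking the already-established lower-order cases. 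Verifying that the exponents $(p,q,r,a)$ produced by the scaling relation stay within the range where both the Euclidean inequality and the embedding apply, while keeping every constant independent of $s$, is the delicate bookkeeping on which the whole argument rests.
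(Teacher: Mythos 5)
First, note that the paper itself offers no proof of this statement: it is quoted from Ding--Wang \cite{DW}, so your attempt has to be measured against the argument in that reference rather than against anything in this article. There the inequality is reduced to the \emph{scalar} Gagliardo--Nirenberg inequality on the closed manifold $M$ via Kato's inequality: because $\overline{\nabla}$ is compatible with $\overline{h}$, one has $\bigl|d|T|\bigr|\leq|\overline{\nabla}T|$ pointwise almost everywhere for any section $T$ of $E\otimes T^*M^{\otimes i}$, so applying the scalar inequality to the functions $|\overline{\nabla}^i s|$ yields the bundle-valued statement with a constant depending only on $M$ and the exponents.

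This is exactly where your proposal has a genuine gap. Your comparison $\overline{\nabla}^i s=\partial^i\tilde s+\sum_k B_{i,k}\#\partial^k\tilde s$ is correct as an identity, but the tensors $B_{i,k}$ are built from the connection one-form of $\overline{\nabla}$ in the chosen trivialisation together with its derivatives, so the bound $\|B_{i,k}\|_{C^0}\leq C(M,E,i)$ you invoke necessarily depends on $\overline{\nabla}$: compactness of $M$ bounds the Christoffel symbols of $g$, but places no bound whatsoever on an arbitrary metric connection on $E$. The constant your argument produces therefore depends on the connection, contradicting both the stated form $C=C(M,j,l,q,r,a)$ and the Remark immediately following the theorem; this independence is not cosmetic, since in Section 4 the inequality is applied with $\overline{\nabla}=\nabla_{A_\epsilon(t)}$, a connection varying with $t$ and $\epsilon$ for which no a priori pointwise bounds are available. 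A secondary, more repairable issue: your absorption of the lower-order terms falls back on the embedding $W^{l,q}(\overline{\nabla})\hookrightarrow L^r$, which fails for large $r$ within the admissible parameter range (e.g.\ $l=1$, $q=2$, $m=4$, $r$ large), so the ``borderline'' cases you defer to an unspecified induction are not exceptional. Both problems disappear if you run your localisation argument only for scalar functions, to establish the closed-manifold Gagliardo--Nirenberg inequality with a constant depending on $M$ and the exponents alone, and then pass to bundle-valued sections by Kato's inequality as in \cite{DW}.
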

\begin{rem}
The above constant $C$ is independent of the connection $\overline{\n}$.
\end{rem}

Fix a smooth connection $C\in\mathscr{A}$. Given any $A\in\mathscr{A}$, it is easy to check that $a:=A-C\in\Omega^1(M,P\times_{ad}\mathfrak{g})$. Define the affine sobolev space $$
\mathscr{A}^{l,q}(M,P\times_{ad}\mathfrak{g}):=\{C\}+W^{l,q}(\Omega^1(M,P\times_{ad}\mathfrak{g})),
$$
where $W^{l,q}(\Omega^1(M,P\times_{ad}\mathfrak{g}))$ is the completion of $C^{\infty}(\Omega^1(M,P\times_{ad}\mathfrak{g}))$ with respect to the norm $||a||_{W^{l,q}(\n_C)}$. It is easy to check that the above definition is independent of the choice of $C$.
\\

If $K$ is compact, applying the isometric embedding $\iota: K\to\mathbb{R}^L$ in Theorem \ref{et}, we embed $P\times_GK$ into the associated vector bundle $P\times_{\rho}\mathbb{R}^L$. Under the connection $C$, the covariant derivative of $P\times_{\rho}\mathbb{R}^L$ is denoted by $D_C$. The Sobolev space $W^{k,p}(M,K;D_C)$ is defined as
$$
W^{k,p}(M,K;D_C):=\left\{\mbox{$\p$ is a section on $P\times_GK$}: ||\p||^p_{W^{k,p}(M,K;D_C)}:=||\p||^p_{L^p}+||D_C\p||^p_{W^{k-1,p}(D_C)}<\infty\right\}.
$$
The homogeneous Sobolev space $\dot{W}^{k,p}(M,K;D_C)$ is defined as
$$
\dot{W}^{k,p}(M,K;D_C):=\left\{\mbox{$\p$ is a section on $P\times_GK$}: ||D_C^k\p||_{L^p}<\infty\right\}.
$$
\begin{defn}
Suppose that $Z$ is a topological space and $\{S_i\}$ is a family of compact subsets. Then $\cap S_i $ is called the minimal compact set.
\end{defn}

If $K$ is not compact, the Sobolev space $W^{k,p}(M,K;D_C)$ is defined as
\begin{eqnarray*}
W^{k,p}(M,K;D_C):&=&\big\{\mbox{$\phi$ is a section on $P\times_GK$}:\\
&&\mbox{$\exists$ minimal compact set $K_c\subset K$ such that $\phi(M)\subset P\times_G K_c$}, \\
&&||\phi||_{W^{k,p}(M,K;D_C)}:=||\phi||_{W^{k,p}(M,K_c;D_C)}<\infty\big\}
\end{eqnarray*}

Proposition 2.2 of \cite{DW} compares the difference of $||D_C\p||_{W^{k-1,2}(D_C)}$ and $||\n_C\p||_{W^{k-1,2}(\n_C)}$.
\begin{thm}\label{DW2}(\cite{DW})
Assume $k\geq n+1$ and that $K$ is compact. Then there exists a constant $C=C(K,k)$ such that for all smooth section $\p$ on $P\times_GK$,
\begin{eqnarray*}
||D_C\p||_{W^{k-1,2}(D_C)}\leq\sum\limits_{t=1}^k||\n_C\p||^t_{W^{k-1,2}(\n_C)}
\end{eqnarray*}
and
\begin{eqnarray*}
||\n_C\p||_{W^{k-1,2}(\n_C)}\leq\sum\limits_{t=1}^k||D_C\p||^t_{W^{k-1,2}(D_C)}.
\end{eqnarray*}
\end{thm}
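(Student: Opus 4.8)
The plan is to exploit that the extrinsic derivative $D_C$ on the associated vector bundle $P\times_\rho\mathbb{R}^L$ and the intrinsic derivative $\n_C$ on the vertical distribution $\p^*V(P\times_GK)$ differ only by terms built from the second fundamental form $\mathbb{A}$ of $K\subset\mathbb{R}^L$, and then to absorb these nonlinear corrections using the multiplicative (Banach algebra) structure of $W^{k,2}(M)$ that becomes available once $k\ge n+1$. First I would record the first-order identity: since both $d(\iota\circ u)$ and the connection term $\chi(C_i)(\iota\circ u)=X_{C_i}(u)$ are tangent to $K$ (the latter being a Killing-field value), the full $\mathbb{R}^L$-valued derivative $D_C\p$ already lands in the vertical distribution, so $D_C\p=\n_C\p$ pointwise. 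The discrepancy appears only at second and higher order, where differentiating a $TK$-valued object extrinsically produces an additional normal component; concretely $D_C^2\p=\n_C^2\p+\mathbb{A}(\p)(D_C\p,D_C\p)$.

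Next I would prove, by induction on $l$, a universal expansion
$$
D_C^l\p=\n_C^l\p+\sum (\n^{a}\mathbb{A})(\p)\#\n_C^{b_1}\p\#\cdots\#\n_C^{b_s}\p,
$$
where each correction term has $s\ge2$ factors $\n_C^{b_i}\p$ with every $b_i\ge1$ and $\sum_i b_i=l$, while the coefficients are pullbacks by $\p$ of covariant derivatives of $\mathbb{A}$. The inductive step is just the Leibniz rule together with the Gauss relation $D_CY=\n_C Y+\mathbb{A}(\p)(d_C\p,Y)$ and the chain rule $D_C[\mathbb{A}(\p)]=(\n\mathbb{A})(\p)\#D_C\p$. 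Because $K$ is compact, $\mathbb{A}$ and all of its covariant derivatives are uniformly bounded, so every coefficient is controlled by a constant $C=C(K,k)$. The same triangular system, solved in the other direction, yields the symmetric expansion of $\n_C^l\p$ as $D_C^l\p$ plus an analogous polynomial in $\{D_C^{b_i}\p\}$, which will give the companion inequality.

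Finally I would pass from these pointwise identities to the Sobolev estimate. Taking $L^2$ norms and summing, $||D_C\p||_{W^{k-1,2}(D_C)}=\sum_{j=1}^{k}||D_C^j\p||_{L^2}$ is dominated by $\sum_{j=1}^k||\n_C^j\p||_{L^2}$ (which is $||\n_C\p||_{W^{k-1,2}(\n_C)}$) together with the $L^2$ norms of the product terms $\n_C^{b_1}\p\#\cdots\#\n_C^{b_s}\p$ with $\sum_i b_i=j\le k$. Each such product I would estimate by applying the interpolation inequality of Theorem \ref{DW} to the section $\n_C\p$ (of the metric-compatible bundle $\p^*V(P\times_GK)\otimes T^*M$) in each factor, choosing H\"older exponents $p_i$ with $\sum_i p_i^{-1}=\tfrac12$, so that $||\n_C^{b_1}\p\#\cdots\#\n_C^{b_s}\p||_{L^2}\le C\,||\n_C\p||^{s}_{W^{k-1,2}(\n_C)}$. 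Since the number of factors $s$ ranges from $1$ (the leading term) up to $k$ (when every $b_i=1$ and $j=k$), summation reproduces exactly $\sum_{t=1}^k||\n_C\p||^t_{W^{k-1,2}(\n_C)}$. I expect the main obstacle to be this multiplicative bookkeeping at the borderline regularity: the derivative $\n_C\p$ only lies in $W^{k-1,2}$, and when $k=n+1$ one has $k-1=n=\tfrac12\dim M$, so $W^{k-1,2}$ fails to embed in $L^\infty$. The resolution is precisely the hypothesis $k\ge n+1$ (making $W^{k,2}(M)$ an algebra, with $||\p||_{L^\infty}$ bounded by compactness of $K$) together with the structural fact $s\ge2$ in every correction term, which is exactly what lets the Gagliardo--Nirenberg exponents close into an $L^2$ bound; the fractional powers produced by the interpolation are what force the answer to appear as a sum of powers rather than a single linear estimate.
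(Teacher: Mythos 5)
Your argument is correct and is essentially the proof of the cited result: the paper does not prove Theorem \ref{DW2} itself but quotes Proposition 2.2 of \cite{DW}, whose argument is exactly your combination of the Gauss-formula expansion $D_C^l\p=\n_C^l\p+\sum(\n^a\mathbb{A})(\p)\#\n_C^{b_1}\p\#\cdots\#\n_C^{b_s}\p$ (with $s\geq 2$, $\sum b_i=l$, coefficients bounded by compactness of $K$) and the bundle-valued Gagliardo--Nirenberg interpolation of Theorem \ref{DW} to close the product estimates at the borderline $k-1=n$. Your bookkeeping of the exponents and of the powers $t=1,\dots,k$ matches the stated conclusion.
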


\subsection{Yang-Mills-Higgs functional and the Hamiltonian geometric flow}
In order to illustrate the idea of YMHS flow, we firstly bring in the conception of Hamiltonian geometric flow. Suppose $u$ is a map taking values in a K\"ahler manifold with a complex structure $J$ and $E(u)$ is a kind of energy functional. Moreover, $\delta E/\delta u$ is the variation with respect to $E$. The Hamiltonian geometric flow is the following kind of partial differential equation
$$
J(u)\partial_tu=\frac{\delta E}{\delta u}(u).
$$

Secondly, we define the following Yang-Mills-Higgs energy functional
\begin{eqnarray*}
\mathcal{YMH}(\p,A):=\int_M|\n_A\p|^2+|F_A|^2+|\mu(\p)|^2.
\end{eqnarray*}
The critical point of $\mathcal{YMH}$ satisfies exactly the Yang-Mills-Higgs equations
\begin{eqnarray*}
\left\{
\begin{aligned}
&\n_A^*\n_A\p+\mu(\p)\n\mu(\p)=0\\
&D_A^*F_A+\p^*\n_A\p=0.
\end{aligned}
\right.
\end{eqnarray*}

\section{Local Existence to Approximation System}
We accept the method of viscous disappearance to approximate (\ref{YMHS}) by the following system
\begin{eqnarray}\label{3}
\left\{
\begin{aligned}
&\partial_t\p=-(\epsilon\mbox{Id}+J(\p))(\n_A^*\n_A\p+\mu(\p)\n\mu(\p))\\
&\partial_tA=-(\epsilon\mbox{Id}+j)(D_A^*F_A+\p^*\n_A\p)\\
&\p(0,\cdot):=\p_0,\s\s A(0,\cdot):=A_0,
\end{aligned}
\right.
\end{eqnarray}
where more details are referred to \cite{DW,JW}.  At beginning, we need the uniqueness and local existence of smooth solution to the approximation system.

It is not difficult to see that $(\ref{3})$ is also degenerate, since the Yang-Mills-Higgs functional is invariant under gange transformations. We are going to employ DeTurck's trick to transform the above equations into parabolic ones. Since our idea and process are the same as those of Theorem 3.1 in \cite{SW}, we only list the sketch and omit the details of computation. The readers can also refer to Section 4 of \cite{FH}.

\begin{thm}
For any $(\p_0,A_0)\in\mathscr{S}\times\mathscr{A}$, there is a positive time $T$ which depends upon $\p_0$ and $A_0$ and a unique smooth solution $(\p,A)$ to $(\ref{3})$ in $[0,T)\times M$ taking $(\p_0,A_0)$ as its initial data.
\end{thm}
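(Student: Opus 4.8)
The plan is to realize the gauge symmetry explicitly and kill it with DeTurck's trick, exactly as for the Yang--Mills--Higgs heat flow in \cite{SW,FH}, the only genuinely new ingredient being the twisting prefactors $\epsilon\,\mathrm{Id}+J(\p)$ and $\epsilon\,\mathrm{Id}+j$ produced by the method of viscous disappearance. First I would fix the reference connection $C$, set $a:=A-C\in\Omega^1(M,P\times_{ad}\mathfrak{g})$, and rewrite $(\ref{3})$ as a quasilinear evolution system for the pair $(\p,a)$. Using $F_A=F_C+D_Ca+\tfrac12[a,a]$ one sees that the principal part of the $A$-equation is $-(\epsilon\,\mathrm{Id}+j)D_C^*D_Ca$, whose symbol in the codirection $\xi$ is $-(\epsilon\,\mathrm{Id}+j)\bigl(|\xi|^2a-\langle\xi,a\rangle\xi\bigr)$; this annihilates the longitudinal one-forms $a\parallel\xi$, which is precisely the degeneracy forced by the gauge invariance of $\mathcal{YMH}$. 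The $\p$-equation has principal part $-(\epsilon\,\mathrm{Id}+J(\p))\n_A^*\n_A\p$ with symbol $-(\epsilon\,\mathrm{Id}+J(\p))|\xi|^2$, all the curvature, moment-map and second fundamental form contributions coming from $(\ref{e:weitzenbock1})$ being of lower order.

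To remove the degeneracy I would introduce the first-order Coulomb-type gauge operator $\mathcal{D}v:=-D_C^*a$, a section of $P\times_{ad}\mathfrak{g}$, and take the DeTurck correction $\mathcal{E}$ to be the infinitesimal gauge action generated by $\mathcal{D}v$: on the connection this is $D_A(\mathcal{D}v)$ and on the section it is $X_{\mathcal{D}v}(\p)$, so that $\mathcal{E}$ is a genuine pure-gauge term, second order on the $A$-slot and first order on the $\p$-slot. Adding $-\mathcal{E}$ as in $(\ref{AS2})$, the corrected symbol of the $A$-equation splits: on $\xi^\perp$ it is still $-(\epsilon\,\mathrm{Id}+j)|\xi|^2$, while on the longitudinal line the Yang--Mills symbol vanishes and the gauge term contributes $-|\xi|^2$. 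The crucial point, and the reason both mechanisms are needed, is the spectral estimate: since $j^2=-\mathrm{Id}$ and $J(\p)^2=-\mathrm{Id}$, the eigenvalues of $\epsilon\,\mathrm{Id}+j$ and $\epsilon\,\mathrm{Id}+J(\p)$ are $\epsilon\pm i$, so the transverse symbols have real part $-\epsilon|\xi|^2<0$, while the DeTurck term supplies the real part $-|\xi|^2<0$ on the longitudinal line. Hence $(\ref{AS2})$ is strictly parabolic in the sense of Petrowsky; note that at $\epsilon=0$ the transverse part is purely imaginary and parabolicity fails, which is exactly what the viscous term repairs.

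With parabolicity in hand I would invoke the classical short-time theory for quasilinear parabolic systems (linearization together with a contraction-mapping argument in the relevant H\"older or Sobolev spaces, as in Theorem 3.1 of \cite{SW} and Section 4 of \cite{FH}) to produce, for any smooth $(\p_0,A_0)$, a unique smooth solution $v=(\tilde\p,\tilde A)$ of $(\ref{AS2})$ on some $[0,T)$ with $T=T(\p_0,A_0)$, bootstrapping parabolic regularity for full smoothness. To return to the degenerate flow I would solve the linear ODE $dS/dt=S\circ\mathcal{D}v$, $S(0)=\mathrm{Id}$, in the gauge group on $[0,T)$, and set $(\p,A):=S^*v$. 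A direct computation, using the gauge invariance of $E$ (so that $(\epsilon\,\mathrm{Id}+J)\,\delta E/\delta u$ is gauge equivariant and $j$ is gauge independent) together with the fact that $\tfrac{d}{dt}(S^*v)$ equals $S^*\partial_tv$ plus the infinitesimal gauge action of $\mathcal{D}u$, shows that the pure-gauge term produced by $\partial_tS$ cancels exactly $-S^*\mathcal{E}v$; hence $(\p,A)$ solves $(\ref{3})$, and $S(0)=\mathrm{Id}$ guarantees the correct initial data.

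For uniqueness I would run the correspondence backwards: given any smooth solution of $(\ref{3})$, solving the same gauge ODE places it in the DeTurck gauge and yields a solution of the parabolic system $(\ref{AS2})$ with the same initial data, so the uniqueness already established for $(\ref{AS2})$ transfers back through the invertible, data-preserving gauge correspondence. I expect the main obstacle to be precisely the parabolicity verification of the second paragraph: one must check that left-multiplication by the twisting factors $\epsilon\,\mathrm{Id}+j$ and $\epsilon\,\mathrm{Id}+J(\p)$ does not destroy the Petrowsky condition and that the pure-gauge DeTurck term carries the sign that damps the longitudinal modes. Once $\epsilon>0$ is fixed this reduces to the half-plane spectral estimate above, but it is exactly the step that collapses at $\epsilon=0$, and it is what forces the two-stage structure, viscous disappearance followed by DeTurck, of the whole argument.
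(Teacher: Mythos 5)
Your forward construction matches the paper's: the same DeTurck-corrected parabolic system (the paper uses $A_0$ as the reference connection, so $b=B-A_0$ and $b(0)=0$, and scales the correction as $\epsilon D_BD_B^*b$ and $(\epsilon D_B^*b)\psi$ rather than your unscaled $-D_C^*a$, but this is cosmetic), followed by the linear gauge ODE $dS/dt=-S\circ(\epsilon D_B^*b)$ and the pullback $(\p,A):=S^*(\psi,B)$. Your symbol analysis is in fact more explicit than the paper's, which merely asserts parabolicity; the half-plane estimate $\mathrm{Re}\,(\epsilon\pm i)|\xi|^2>0$ together with the longitudinal damping from the gauge term is exactly the right verification.

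The genuine gap is in your uniqueness step. You claim that, given a solution $(\p,A)$ of $(\ref{3})$, ``solving the same gauge ODE places it in the DeTurck gauge.'' It does not: in the forward direction $\mathcal{D}v$ is evaluated on the \emph{known} parabolic solution $(\psi,B)$, so $dS/dt=S\circ\mathcal{D}v$ is a genuine pointwise linear ODE; in the reverse direction the only known object is $(\p,A)$, and the DeTurck vector must be evaluated on the unknown gauged pair $B=(S^{-1})^*A=S^{-1}AS+S^{-1}dS$. Hence $D_B^*(B-A_0)$ contains second spatial derivatives of $S$, and the ``ODE'' is actually a second-order PDE for the gauge transformation. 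The paper resolves this by solving the parabolic equation $\partial_tS=-\epsilon D_A^*D_AS-D_A^*\circ S\{\epsilon(A-A_0)\}$ with $S(0)=\mathrm{Id}$ and then verifying the compatibility identity $S\circ\{\epsilon D_B^*(B-A_0)\}=\epsilon D_A^*D_AS+D_A^*\circ S\{\epsilon(A-A_0)\}$, which is what guarantees that $(\psi,B):=\{S^{-1}\}^*(\p,A)$ solves the DeTurck system. This is the same subtlety as in DeTurck-type uniqueness proofs for Ricci flow, where the reverse gauge is produced by a harmonic-map-type heat flow rather than an ODE. Once this is repaired, the rest of your transfer argument (uniqueness for the parabolic system, then uniqueness of the now-determined linear ODE for $S$) goes through as in the paper.
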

\begin{proof}
To get existence, we consider two systems. The first one is as following
\begin{eqnarray}\label{DeTurck}
\left\{
\begin{array}{rl}
&\partial_t\psi+(\epsilon\mbox{Id}+J(\psi))\n_B^*\n_B\psi=-(\epsilon\mbox{Id}+J(\psi))\mu(\psi)\n\mu(\psi)+(\epsilon D_B^* b)\psi\\
&\partial_tb+(\epsilon\mbox{Id}+j)D_B^*F_B+\epsilon D_BD_B^*b=-(\epsilon\mbox{Id}+j)\psi^*\n_B\psi\\
&\psi(0,\cdot):=\phi_0,\s\s b(0,\cdot):=0
\end{array}
\right.
\end{eqnarray}
with $b:=B-A_0$. It is easy to verify that the above system is parabolic.

The second one is with respect to a family of gauge transformations $\{S(t)\}$
\begin{eqnarray*}
\left\{
\begin{array}{rl}
&\frac{dS}{dt}=-S\circ(\epsilon D_B^*b)\\
&S(0):=\mbox{Id}.
\end{array}
\right.
\end{eqnarray*}
At last, Let $(\phi(t),A(t)):=S(t)^*(\psi(t),B(t))$. It can be verified that this is the wanted solution.

For the uniqueness we reverse the above process. Suppose $(\p,A)$ is a solution to $(\ref{3})$, we solve the following parabolic equation to get a family of gauge transformations
\begin{eqnarray*}
\left\{
\begin{array}{rl}
&\partial_tS=-\epsilon D_A^*D_AS-D_A^*\circ S\{\epsilon(A-A_0)\}\\
&S(0):=\mbox{Id}.
\end{array}
\right.
\end{eqnarray*}
It is easy to check that if we define $(B(t),\psi(t)):=\{S(t)^{-1}\}^*(A(t),\p(t))$,  the next identity holds true
$$
S\circ\{\epsilon D_B^*(B-A_0)\}=\epsilon D_A^*D_AS+D_A^*\circ S\{\epsilon(A-A_0)\}
$$
and  $(B,\psi)$ solves $(\ref{DeTurck})$.
\end{proof}

\section{Uniform Estimations}\label{UE}
In order to omit some tedious computation, we assume that $M$ is a $2n$-dimensional flat torus with the usual metric from now on.

The smooth solution of $(\ref{3})$ is $(\p_{\epsilon},A_{\epsilon})$ with $a_{\epsilon}:=A_{\epsilon}-C$. For convenience we also denote them by $(\p,A,a)$ and $a$ throughout this section.

\subsection{Compact range}
Suppose $U_i\subset M$ ($1\leq i\leq m$) is an open neighborhood  and $\sigma_i: U_i\to P$ is a local unit section such that $\p_0|_{U_i}:=\sigma_i\ast u_i$ and
$$
M=\bigcup\limits_{i=1}^mU_i.
$$
Define
$$
\mathcal{R}_i:=\{y\in K: \mbox{dist}_K(y,u_i(M))<1\}
$$
and
$$
\mathcal{R}:=\bigcup\limits_{i=1}^m\mathcal{R}_i.
$$
From the compactness of $G\times\overline{\mathcal{R}}$ it follows that $G[\overline{\mathcal{R}}]$ and $P\times_GG[\overline{\mathcal{R}}]$ are compact where we have defined
$$
G[\mathcal{R}]:=\{gy: g\in G, y\in\mathcal{R}\}\s\mbox{and}\s G[\overline{\mathcal{R}}]:=\{gy: g\in G, y\in\overline{\mathcal{R}}\}.
$$
Furthermore, we define
$$
T_{\epsilon}:=\sup\{t>0: \p(t,M)\subset P\times_GG[\mathcal{R}]\}.
$$

\subsection{Geometric energy and evolutionary equation}
Define the energy functional
$$
E_k:=\frac{1}{2}\sum\limits_{l=0}^k\left(||\n_A^lF_A||_{L^2}^2+||\n_A^{l+1}\p||_{L^2}^2\right).
$$
Along $(\ref{3})$ $F_A$ satisfies
\begin{eqnarray*}
\partial_tF_A&=&-D_A\{(\epsilon\mbox{Id}+j)(D_A^*F_A+\p^*\n_A\p)\}=-(\epsilon\mbox{Id}+j)\{D_AD_A^*F_A+D_A(\p^*\n_A\p)\}\\
&=&-(\epsilon\mbox{Id}+j)\{\n^*_A\n_AF_A+g(F_A,\p)\}
\end{eqnarray*}
with
$$
g(F_A,\p):=F_A\#F_A+\n_A\p\#\n_A\p+\p^*F_A\p,
$$
where we have used the first Bianich identity and $(\ref{e:weitzenbock1})$.

In the next, we shall get the evolutionary equations with respect to $\n_A^lF_A$ and $\n_A^{l+1}\p$. Accepting the same process of equation $(4.30)$ in \cite{SW}, we obtain
\begin{eqnarray*}
\left\{
\begin{array}{rl}
&\partial_t\n_A^{l+1}\p=-(\epsilon\mbox{Id}+J(\p))\left\{\n_A^*\n_A^{l+2}\p+\mathcal{Q}_1^l(\n_A\p,F_A)+\frac{1}{2}\n_A^{l+1}\left\{\left(\n|\mu|^2\right)(\p)\right\}\right\}\\
&\partial_t\n_A^lF_A=-(\epsilon\mbox{Id}+j)\left\{\n_A^*\n_A^{l+1}F_A+\mathcal{Q}_2^l(\n_A\p,F_A)+\n_A^l\left(g(F_A,\p)\right)\right\},
\end{array}
\right.
\end{eqnarray*}
where $\mathcal{Q}^l_1, \mathcal{Q}^l_2$ are lower order terms depending on derivatives of $\n_A\p$ and $F_A$ up to order $l$. Moreover, we have the following estimations
$$
|\mathcal{Q}^l_1(\n_A\p,F_A)|\leq C_1(l)\sum|\n_A^{j_1}F_A|\cdots|\n_A^{j_r}F_A||\n_A^{1+j_{r+1}}\p|\cdots|\n_A^{1+j_{r+s}}\p|,
$$
where $C_1(l)$ is a positive constant depending on $\R_K$ and the indices satisfy
$$
0\leq j_1,\cdots,j_{r+s}\leq l;\s j_1 +\cdots+j_{r+s}=l;\s s\geq3.
$$
Furthermore, we can get
$$
|\n_A^l\left\{\left(\n|\mu|^2\right)(\p)\right\}|\leq C_2(l)\sum|\n_A^{i_1}\p|\cdots|\n_A^{i_p}\p|
$$
with
$$
i_1,\cdots,i_p\geq1; i_1+\cdots+i_p=l,
$$
where $C_2(l)$ is a positive constant depending on $\mu$. By similar idea, we also get
\begin{eqnarray*}
&&|\mathcal{Q}^l_2(\n_A\p,F_A)|+|\n_A^l(g(F_A,\p))|\\
&\leq&C_3(l)\sum\limits_{i=0}^l\left\{|\n_A^iF_A||\n_A^{l-i}F_A|+|\n_A^{i+1}\p||\n_A^{l-i+1}\p|\right\}\\
&&+C_3(l)\sum\limits_{0\leq j_1+j_2+j_3\leq l-2}|\n_A^{j_1}F_A||\n_A^{j_2+1}\p||\n_A^{j_3+1}\p|\\
&&+C_3(l)\sum\limits_{i=0}^{l-1}|\n^i_AF_A||\n_A^{l-i}\p|+C_3(l)|\n_A^lF_A|
\end{eqnarray*}
for a positive constant $C_3(l)$.

\begin{rem}
$C_1(l), C_2(l)$ and $C_3(l)$ depend on $P\times_GG[\mathcal{R}]$.
\end{rem}

\subsection{Controlling $||\n_A^lF_A||_{L^2}$}
Taking time derivative with respect to $||\n_A^lF_A||^2_{L^2}/2$ leads to
\begin{eqnarray*}
&&\frac{1}{2}\frac{d}{dt}||\n_A^lF_A||^2_{L^2}=\int\langle\n_A^lF_A,\partial_t\n_A^lF_A\rangle\\
&=&-\epsilon||\n_A^{l+1}F_A||_{L^2}^2-\int\langle\n_A^lF_A,(\epsilon\mbox{Id}+j)\mathcal{Q}_2^l(\n_A\p,F_A)\rangle-\int\langle\n_A^lF_A,(\epsilon\mbox{Id}+j)\n_A^l\left(g(F_A,\p)\right)\rangle\\
&\leq&C_4(l)\sum\limits_{i=0}^l\int|\n_A^lF_A||\n_A^iF_A||\n_A^{l-i}F_A|+C_4(l)\sum\limits_{i=0}^l\int|\n_A^lF_A||\n_A^{i+1}\p||\n_A^{l-i+1}\p|\\
&&+C_4(l)\sum\limits_{0\leq j_1+j_2+j_3\leq l-2}\int|\n_A^lF_A||\n_A^{j_1}F_A||\n_A^{j_2+1}\p||\n_A^{j_3+1}\p|\\
&&+C_4(l)\sum\limits_{i=0}^{l-1}\int|\n_A^lF_A||\n^i_AF_A||\n_A^{l-i}\p|+C_4(l)||\n_A^lF_A||^2_{L^2}
\end{eqnarray*}
Applying the same approaches of Lemma 3.2 and Lemma 3.3 in \cite{DW}, we can get the following estimations:

when $l\geq n+2$,
$$
\frac{1}{2}\frac{d}{dt}||\n_A^lF_A||^2_{L^2}\leq C_8(l)\left(1+||F_A||^2_{W^{l,2}(\n_A)}+||\n_A\p||^2_{W^{l,2}(\n_A)}\right)\left(1+||F_A||^2_{W^{l-1,2}(\n_A)}+||\n_A\p||^2_{W^{l-1,2}(\n_A)}\right)
$$
which is similar to $(3.24)$ in \cite{DW};

When $l\leq n+1$,
$$
\frac{1}{2}\frac{d}{dt}||\n_A^lF_A||^2_{L^2}\leq C_8(n)\left(1+||F_A||^4_{W^{n+1,2}(\n_A)}+||\n_A\p||^4_{W^{n+1,2}(\n_A)}\right).
$$
\subsection{Controlling $||\n_A^{l+1}\p||_{L^2}$}
Taking time derivative with respect to $||\n_A^{l+1}\p||^2_{L^2}/2$ leads to
\begin{eqnarray*}
&&\frac{1}{2}\frac{d}{dt}||\n_A^{l+1}\p||^2_{L^2}=\int\langle\n_A^{l+1}\p,\partial_t\n_A^{l+1}\p\rangle\\
&=&-\epsilon||\n_A^{l+2}\p||^2_{L^2}-\int\langle\n_A^{l+1}\p,(\epsilon\mbox{Id}+J(\p))\mathcal{Q}_1^l(\n_A\p,F_A)\rangle\\
&&-\frac{1}{2}\int\langle\n_A^{l+1}\p,(\epsilon\mbox{Id}+J(\p))\n_A^{l+1}\left\{\left(\n|\mu|^2\right)(\p)\right\}\rangle\\
&\leq&C_9(l)\sum\int|\n_A^{l+1}\p||\n_A^{j_1}F_A|\cdots|\n_A^{j_r}F_A||\n_A^{1+j_{r+1}}\p|\cdots|\n_A^{1+j_{r+s}}\p|\\
&&+C_9(l)\sum\int|\n_A^{l+1}\p||\n_A^{i_1}\p|\cdots|\n_A^{i_p}\p|
\end{eqnarray*}
Applying the same approaches of Lemma 3.2 and Lemma 3.3 in \cite{DW}, we can get the following estimations:

when $l\geq n+2$,
$$
\frac{1}{2}\frac{d}{dt}||\n_A^{l+1}\p||^2_{L^2}\leq C_{12}(l)\left(1+||F_A||^2_{W^{l,2}(\n_A)}+||\n_A\p||^2_{W^{l,2}(\n_A)}\right)\left(1+||F_A||^{\kappa(l)}_{W^{l-1,2}(\n_A)}+||\n_A\p||^{\kappa(l)}_{W^{l-1,2}(\n_A)}\right)
$$
which is similar to $(3.24)$ in \cite{DW};

When $l\leq n+1$, the same method yields
$$
\frac{1}{2}\frac{d}{dt}||\n_A^{l+1}\p||^2_{L^2}\leq C_{12}(n)\left(1+||F_A||^{\kappa(n)}_{W^{n+1,2}(\n_A)}+||\n_A\p||^{\kappa(n)}_{W^{n+1,2}(\n_A)}\right),
$$
where $\{\kappa(l)\}$ is a family of positive integers dependent of $l$.

\subsection{Controlling the energy}
Combining the above results, we obtain
\begin{eqnarray}\label{linear inequality}
dE_k/dt\leq C_{13}(k)(1+E_k)(1+E_{k-1})^{\lambda(k)}\s\mbox{for $k\geq n+2$}
\end{eqnarray}
and
$$
dE_{n+1}/dt\leq C_{13}(n)(1+E_{n+1})^{\lambda(n)}
$$
for some constants $C_{13}(k)$ and $\lambda(k)$. If let $f(t):=1+E_{n+1}(t)$, then we have
$$
df/dt\leq C_{13}(n)f^{\lambda(n)},\s f(0):=1+\frac{1}{2}\sum\limits_{l=0}^{n+1}\left(||\n_{A_0}^lF_{A_0}||_{L^2}^2+||\n_{A_0}^{l+1}\p_0||_{L^2}^2\right).
$$
It follows that there exists a positive time $T_{n+1}<T_{\epsilon}$ and 
$$
K_{n+1}:=K_{n+1}\left(||F_{A_0}||_{W^{n+1,2}(\n_{A_0})},||\n_{A_0}\p_0||_{W^{n+1,2}(\n_{A_0})}\right)
$$ 
satisfing
$$
E_{n+1}(t)\leq K_{n+1},\s\mbox{for all $t\in\left[0,T_{n+1}\right]$}.
$$
When $k\geq n+2$, The linear inequality $(\ref{linear inequality})$ with respect to $E_k$ implies inductively the bound 
$$
K_k:=K_k\left(||F_{A_0}||_{W^{k,2}(\n_{A_0})},||\n_{A_0}\p_0||_{W^{k,2}(\n_{A_0})}\right)
$$ 
for all $t\in\left[0,T_{n+1}\right]$, namely
$$
E_k(t)\leq K_k\s\mbox{for $t\in\left[0,T_{n+1}\right]$.}
$$

Using Theorem \ref{DW} and evolutionary equations yields
$$
||\partial_t\p(t)||_{L^{\infty}}\leq\bar{C}:=C_{14}(n)\max\{\sqrt{K_i}:0\leq i\leq n+2\}
$$
for $t\in\left[0,T_{n+1}\right]$ and some constant $C_{14}(n)$. Thus we have
$$
\mbox{dist}(\p(t,x),\p_0(x))\leq \bar{C}t.
$$
Let $T:=\min\left\{T_{n+1},(2\bar{C})^{-1}\right\}$. Along $[0,T]$ we get
$$
E_k(t)\leq K_k\s\mbox{for $k\geq n+2$,}
$$
where $K_k$ depends upon
$$
\sum\limits_{l=0}^k\left(||\n_{A_0}^lF_{A_0}||_{L^2}^2+||\n_{A_0}^{l+1}\p_0||_{L^2}^2\right).
$$
\begin{rem}
It is worthwhile pointing out that, if $K$ is compact, the existing time $T:=T_{n+1}$ depends on 
$$
||F_{A_0}||_{W^{n+1,2}(\n_{A_0})}\s\mbox{and}\s||\n_{A_0}\p_0||_{W^{n+1,2}(\n_{A_0})}.
$$
\end{rem}

\subsection{Controlling $||A-C||_{W^{k,2}(\n_C)}$ and $||\n_C\p||_{W^{k,2}(\n_C)}$}
Recall $a=A-C$. Given any $Y\in T^*M^{\otimes r}\otimes\p^*V(P\times_GK)$, it is not difficult to verify 
\begin{eqnarray}\label{formula}
\n_C^kY=\n_A^kY+\mathcal{Q}^k(Y,a)
\end{eqnarray}
where $\mathcal{Q}^k$ are lower order terms depending on derivatives of $Y$ and $a$ up to order $k-1$ and satisfy
$$
|\mathcal{Q}^k(Y,a)|\leq C_k\sum|\n_A^{i_1}Y|\cdots|\n_A^{i_r}Y||\n_A^{i_{r+1}}a|\cdots|\n_A^{i_{r+s}}a|
$$
with $0\leq i_1+\cdots+i_r+i_{r+1}+\cdots+i_{r+s}\leq k-1$, $r,s\geq1$ and $r+s\leq k+1$ for some constant $C_k$ depending upon $P\times_GK$.

Direct calculation gives
$$
\partial_t\n_A^ka=\n_A^k\partial_ta+\sum\limits_{i=1}^{k-1}\n_A^i\partial_ta\#\n_A^{k-i}a
$$
and
$$
\n_A^l\partial_ta=-(\epsilon\mbox{Id}+j)\left(\n_A^lD_A^*F_A+\p^*\n_A^l\p+\sum\limits_{p=1}^l\n_A^p\p\#\n_A^{l-p+1}\p\right),
$$
which means
\begin{eqnarray*}
||\n_A^l\partial_ta||_{L^r}\leq C_{15}\left(||\n_A^{l+1}F_A||_{L^r}+||\n_A^l\p||_{L^r}+\sum\limits_{p=1}^l||\n_A^p\p||_{L^s}||\n_A^{l-p+1}\p||_{L^t}\right)
\end{eqnarray*}
for all $r,s,t\in[1,\infty]$ with $1/r=1/s+1/t$.

In order to estimate $||\n_A^ka||_{L^2}$, we compute
\begin{eqnarray*}
&&\frac{1}{2}\frac{d}{dt}||\n_A^ka||^2_{L^2}
\leq C_{16}||\n^k_Aa||_{L^2}\left(||\n^k_A\partial_ta||_{L^2}+\sum\limits_{i=1}^{k-1}||\n^i_A\partial_ta||_{L^{p_i}}||\n_A^{k-i}a||_{L^{q_i}}\right)\\
&\leq&C_{17}||\n^k_Aa||_{L^2}\left(||\n_A^{k+1}F_A||_{L^2}+||\n_A^k\p||_{L^2}+\sum\limits_{p=1}^k||\n_A^p\p||_{L^s}||\n_A^{k-p+1}\p||_{L^t}\right)\\
&&+C_{17}\sum\limits_{i=1}^{k-1}||\n^k_Aa||_{L^2}||\n_A^{k-i}a||_{L^{q_i}}\left(||\n_A^{i+1}F_A||_{L^{p_i}}+||\n_A^i\p||_{L^{p_i}}+\sum\limits_{p=1}^i||\n_A^p\p||_{L^{s_i}}||\n_A^{i-p+1}\p||_{L^{t_i}}\right)
\end{eqnarray*}
with $s,t,p_i,q_i,s_i,t_i\in[2,\infty]$ to be determined such that $1/2=1/s+1/t$, $1/p_i+1/q_i=1/2$ and $1/p_i=1/s_i+1/t_i$. Theorem \ref{DW} tells us that we can find indices meeting the above conditions to ensure the following inequalities
$$
||\n_A^p\p||_{L^s}\leq C_{18}||\n_A\p||_{W^{k,2}(\n_A)}\s\mbox{and}\s||\n_A^{k-p+1}\p||_{L^t}\leq C_{18}||\n_A\p||_{W^{k,2}(\n_A)}
$$
$$
||\n_A^{k-i}a||_{L^{q_i}}\leq C_{18}||a||_{W^{k,2}(\n_A)}\s\mbox{and}\s||\n_A^{i+1}F_A||_{L^{p_i}}\leq C_{18}||F_A||_{W^{k,2}(\n_A)}
$$
$$
||\n_A^i\p||_{L^{p_i}}\leq C_{18}||\n_A\p||_{W^{k,2}(\n_A)}\s\mbox{and}\s||\n_A^p\p||_{L^{s_i}}\leq C_{18}||\n_A\p||_{W^{k,2}(\n_A)}
$$
and
$$
||\n_A^{i-p+1}\p||_{L^{t_i}}\leq C_{18}||\n_A\p||_{W^{k,2}(\n_A)}.
$$

To sum it up, it follows
\begin{eqnarray*}
\frac{1}{2}\frac{d}{dt}||a||^2_{W^{k,2}(\n_A)}\leq C_{19}(1+E_{k+1}^2)(1+||a||^2_{W^{k,2}(\n_A)}).
\end{eqnarray*}
Gronwall's Inequality tells us that there exists a smooth increasing function $f_k$ such that
$$
||a(t)||_{W^{k,2}(\n_A)}\leq f_k\left(t,E_{k+1},||A_0-C||_{W^{k,2}(\n_{A_0})}\right)
$$
for all $t\in[0,T]$.

At last of this subsection, we shall estimate $||a(t)||_{W^{k,2}(\n_C)}$ and $||\n_C\p||_{W^{k,2}(\n_C)}$. From $(\ref{formula})$ it follows
\begin{eqnarray*}
||\n_C^ka||_{L^2}&\leq&||\n_A^ka||_{L^2}+C_k\sum\Big|\Big||\n_A^{i_1}a|\cdots|\n_A^{i_t}a|\Big|\Big|_{L^2}\\
&\leq&||\n_A^ka||_{L^2}+C_k\sum||\n_A^{i_1}a||_{L^{p_{i_1}}}\cdots||\n_A^{i_t}a||_{L^{p_{i_t}}}
\end{eqnarray*}
with $0\leq i_1+\cdots+i_t\leq k-1$ and $1\leq t\leq k+1$, where $p_{i_1},\cdots,p_{i_t}$ are positive numbers to be determined such that
$$
\frac{1}{p_{i_1}}+\cdots+\frac{1}{p_{i_t}}=\frac{1}{2}.
$$
Theorem \ref{DW} tells us that we can pick indices $p_{i_1},\cdots,p_{i_t}$ such that
$$
||\n_A^{i_r}a||_{L^{p_{i_r}}}\leq C_k||a||_{W^{k-1,2}(\n_A)}\s\mbox{for $1\leq r\leq t$.}
$$
To sum it up, we are able to obtain an increasing function $\beta_k$ such that
$$
||a(t)||_{W^{k,2}(\n_C)}\leq\beta_k\left(t,E_{k+1},||A_0-C||_{W^{k,2}(\n_{A_0})}\right)
$$
for all $t\in[0,T]$.

Similar approaches are applied to 
$$
\n_C^{k+1}\p=\n_A^{k+1}\p+\mathcal{Q}^k(\n_A\p,a)\s\mbox{and}\s\n_C^kF_A=\n_A^kF_A+\mathcal{Q}^k(F_A,a).
$$
Hence we can get two increasing functions $\alpha_k$ and $\tau_k$ such that
$$
||\n_C\p(t)||_{W^{k,2}(\n_C)}\leq \alpha_k\left(t,E_k,||A_0-C||_{W^{k,2}(\n_{A_0})}\right)
$$
and
$$
||F_A(t)||_{W^{k,2}(\n_C)}\leq \tau_k\left(t,E_k,||A_0-C||_{W^{k,2}(\n_{A_0})}\right)
$$
for all $t\in[0,T]$.

\section{Proof of Theorem \ref{thm1.1}}
We only list the sketch since the trick to get local smooth or regular solution is the same as that of Theorem 1.1 in \cite{DW}.

\textbf{Step 1 of smoothness:}

Theorem \ref{DW2} tells us that we have already obtained
$$
||F_{A_{\epsilon}}(t)||_{W^{k+1,2}(\n_C)}+||D_C\p_{\epsilon}(t)||_{W^{k+1,2}(D_C)}+||a_{\epsilon}(t)||_{W^{k,2}(\n_C)}\leq\gamma_k
$$
for some constant 
$$
\gamma_k=\gamma_k\left(K_{k+1},||A_0-C||_{W^{k,2}(\n_{A_0})}\right)
$$ 
and all $t\in[0,T]$.

\textbf{Step 2 of smoothness:}

Fixing $k$,  we get $(\p_k,A_k)\in C^k$ by sending $\epsilon$ to $0$ and Sobolev Embedding Theorem.

\textbf{Step 3 of smoothness:}

Diagonal method implies that we are able to find $(\p,A)\in C^{\infty}$ such that for any $k$, $(\p_{\epsilon},A_{\epsilon})$ converges to $(\p,A)$ in $C^k$. Hence, $(\p,A)$ is a smooth solution to $(\ref{YMHS})$ with smooth initial data.
\\

\textbf{Step 1 of regularity:}

If the initial data $(\phi_0, A_0)$ lies in $W^{k+2,2}(M,K;D_C)\times\mathscr{A}^{k+2,2}(M,P\times_{ad}\mathfrak{g})$, we choose a family of smooth pair $(\phi^i_0,A^i_0)$ to approximate $(\phi_0, A_0)$ in $(W^{k+2,2}
(D_C), W^{k+2,2}(\n_C))$-norm. Let $(\phi_i,A_i)$ be the smooth solution to $(\ref{YMHS})$ taking $(\phi^i_0,A^i_0)$ as its initial data and existing for $[0, T_i]$.

\textbf{Step 2 of regularity:}

Repeating the process of Section \ref{UE}, we get a uniform lower bound $\hat{T}$ of $T_i$, i.e. $T_i\geq\hat{T}>0$ for large $i$ and uniform upper bounds of $(W^{k+2,2}(D_C), W^{k,2}(\n_C),W^{k+1,2}(\n_C))$-norms with respect to $(\phi_i,A_i,F_{A_i})$.

\textbf{Step 3 of regularity:}

Sending $i$ to $\infty$ gives a weak limit $(\phi, A,F_A)$. Classical trick implies that it is also a strong solution.

\section{Proof of Theorem \ref{thm1.2}}
We only list the sketch since the trick is the same as that of Theorem 1.2 in \cite{DW}.

\textbf{Step 1:}

Given any $(\p_0,A_0)$ lying in $W^{k+2,2}(\mathbb{R}^{2n},K;D_C)\times\mathscr{A}^{k+2,2}(\mathbb{R}^{2n},P\times_{ad}\mathfrak{g})$, we pick a family of smooth pair $(\p^i_0,A^i_0)$ supported by
$$
\underbrace{[-R_i,R_i]\times\cdots\times[-R_i,R_i]}\limits_{\mbox{$2n$-times}},
$$
which converges to $(\p_0,A_0)$ in $W^{k+2,2}(\mathbb{R}^{2n},K;D_C)\times\mathscr{A}^{k+2,2}(\mathbb{R}^{2n},P\times_{ad}\mathfrak{g})$-norm for any fixed $k$.

\textbf{Step 2:}

Let 
$$
\mathbb{T}_i:=\underbrace{(\mathbb{R}/[-R_i,R_i])\times\cdots\times(\mathbb{R}/[-R_i,R_i])}\limits_{\mbox{$2n$-times}}
$$
and regard $(\p^i_0,A^i_0)$ as a section defined on $\mathbb{T}_i$. Then we solve $(\ref{YMHS})$ taking $(\p^i_0,A^i_0)$ as its initial data on the base manifold $\mathbb{T}_i$ to get a family of smooth solutions $\{(\p^i,A^i)\}$.

\textbf{Step 3:}

The readers are able to obtain a local strong solution by repeating Step 2 and Step 3 of regularity.

\section{Local Well-Posedness to ASF}
Since ASF is similar to classical Schr\"odinger flow, the proofs of local existence and uniqueness are almost the same as those of \cite{DW} and \cite{SW2} respectively except for some terms with respect to the curvature $F_A$ which is now a known tensor. Hence, we omit details.

\section*{Acknowledgement}
The author is supported by National Natural Science Foundation of China (Foundation of Mathematical Tian Yuan) (Grant No. 12326345).

\vspace{1cm}
\noindent{Zonglin Jia}\\
Department of Mathematics and Physics, North China Electric Power University, Beijing,
China.\\
Email: 50902525@ncepu.edu.cn

\end{document}